\newtheorem{theorem}{Theorem}
\newtheorem{proposition}[theorem]{Proposition}
\theoremstyle{definition}
\newtheorem{definition}[theorem]{Definition}
\theoremstyle{remark}
\newtheorem{remark}[theorem]{Remark}
\newtheorem{example}[theorem]{Example}
\numberwithin{equation}{section}
\newcommand{\ca}{\mathcal{A}}
\newcommand{\cb}{\mathcal{B}}
\newcommand{\cl}{\mathcal{L}}
\newcommand{\ck}{\mathcal{K}}
\newcommand{\lp}{\mathcal{L}^p}
\newcommand{\g}{\gamma}
\newcommand{\G}{\Gamma}
\newcommand{\ci}{\sqrt{2 i}}
\DeclareMathOperator{\ind}{index}
\DeclareMathOperator{\End}{End}
\DeclareMathOperator{\Ch}{Ch}
\DeclareMathOperator{\tr}{tr}
\DeclareMathOperator{\ord}{ord}
\DeclareMathOperator{\Tr}{Tr}
\DeclareMathOperator{\PF}{PF}
\DeclareMathOperator{\Ker}{Ker}
\DeclareMathOperator{\Dom}{Dom}
\DeclareMathOperator{\Res}{Res}
\DeclareMathOperator{\vol}{vol}
\providecommand{\CC}{{\mathbb{C}}}
\providecommand{\RR}{{\mathbb{R}}}
\providecommand{\QQ}{{\mathbb{Q}}}
\providecommand{\ZZ}{{\mathbb{Z}}}
\newcommand{\lra}{\longrightarrow}
\providecommand{\BB}{{\mathcal B}} 
\providecommand{\KK}{{\mathcal K}} 
\providecommand{\LL}{{\mathcal L}} 
\providecommand{\TTX}{{\mathbb T}X} 
\newcommand{\del}{{\partial}}
\newcommand{\delbar}{{\bar{\partial}}}
\newcommand{\Ind}{{\mathrm{Index}}}
\newcommand{\coker}{{\mathrm{coker}}}
\newcommand{\ch}{{\mathrm{ch}}}
\providecommand{\Td}{{\mathrm{Td}}}
\providecommand{\ev}{{\mathrm{ev}}}
\author{Alexander Gorokhovsky}
\address{University of Colorado, Boulder, Campus Box 395, Boulder, Colorado, 80309, USA}
\email{gorokhov@euclid.colorado.edu}
\author{Erik van Erp}
\address{Dartmouth College, 6188, Kemeny Hall, Hanover, New Hampshire, 03755, USA}
\email{jhamvanerp@gmail.com}
\title{Index theory and Noncommutative Geometry: A Survey }
\begin{document}
\maketitle

 \section{Introduction}

The contributions of Alain Connes to index theory are fundamental, broad, and deep.
This article is an introductory  survey of index theory in the context of noncommutative geometry.
The selection of topics is determined by the interests of the authors, and we make no claim to being exhaustive.

This survey  has  two parts.
In the first half  we consider index theory in the setting of $K$-theory.
If $P$ is an elliptic linear differential operator on a closed smooth manifold $X$,
then the highest order part of $P$ determines a cohomological object that is most naturally understood as an element in  $K$-theory,
\[ [\sigma(P)]\in K^0(T^*X)\]
The index of $P$ is a function of this $K$-theory class $[\sigma(P)]$.
Atiyah and Singer derived a topological formula for the index map
 \[ K^0(T^*X)\to \ZZ \qquad [\sigma(P)]\mapsto \Ind\, P\]
Developments in noncommutative geometry have shed considerable light on the nature of this map.
Here we emphasize, on the one hand, the organizing role of Connes' tangent groupoid and its generalizations,
and on the other hand his study of the wrong-way functoriality of the Gysin map in $K$-theory.
An important fruit of these investigations  of index theory in noncommutative geometry is the Baum-Connes conjecture, which concerns the $K$-theory of the reduced $C^*$-algebra of locally compact groups.
This work has deep ties to geometry and algebraic topology.

A  different and purely analytic  approach to  index theory is based on the McKean-Singer formula \cite{McKean-Singer} which expresses the index of an elliptic operator $P$ in terms of the trace of the heat kernel,
\[ \Ind\,P = \Tr e^{-tP^*P}-\Tr e^{-tPP^*}\qquad t>0 \]
The heat kernel has an asymptotic expansion in powers of $t$ (as $t\to 0^+$).
This implies that its trace, and hence the index of $P$, can be computed as the integral of a density which is locally determined by the coefficients of the operator $P$.
This density is called the {\em local index} of $P$.

For Dirac-type operators the local index
density can be explicitly computed, and is identified with differential forms that represent the characteristic classes that appear in the Atiyah-Singer formula.
Thus, one obtains not just a formula for the index, which is a {\em global} invariant, but a representation of the index as the integral of a  density that is canonically and {\em locally} determined by the operator.
This makes it possible to extend index theory to manifolds with boundary, as in the Atiyah-Patodi-Singer  theorem \cite{APS}.
Bismut extended this approach to elliptic families \cite{Bismut}, using the superconnection approach developed for this purpose by Quillen \cite{Quillen}. The local index formula in this case describes an explicit differential form representing the   Chern character of an elliptic family.

The extension of local index theory to the noncommutative framework necessitates the construction  of the Chern character for  noncommutative algebras. In this case, a natural receptacle for the Chern character is cyclic homology theory. Cyclic theory has been closely connected to  index theory from its very beginning \cite{cn85}.

The second half of the present paper focuses on a more recent development: the local index formula of Connes and Moscovici in the context of noncommutative geometry.
Our goal is to give a brief yet detailed exposition of the proof of this theorem. We follow closely the arguments of the original paper \cite{CM95}.
A distinctive feature of the Connes-Moscovici paper is its detailed and illuminating explicit calculations, illustrating the cocycle property of the local index formula, and the renormalization procedure used to derive it.
For  an alternative approach to the Connes-Moscovici formula we refer the reader to \cite{Higson1, Higson2}.

\section{Atiyah and Singer}

In 1963, Michael Atiyah and Isadore Singer announced their formula for the index of an elliptic  operator on a compact manifold \cite{AS63}.
On a  manifold $X$ with  complex vector bundles $E, F$,
a linear differential operator of order $d$
\[ P:C^\infty(X,E)\to C^\infty(X,F) \]
is, in local coordinates on $U\subset X$ and with local trivializations of $E, F$, of the form
\[ P = \sum_{\alpha_1+\cdots +\alpha_n=0}^d\; a_\alpha\;\frac{\del^{\alpha_1}}{\del x_1^{\alpha_1}}\,\cdots \,\frac{\del^{\alpha_n}}{\del x_n^{\alpha_n}} : C^\infty(U,\CC^r)\to C^\infty(U,\CC^r)\]
where the coefficients $a_\alpha$ are  smooth functions on $U$ with values in the algebra $M_r(\CC)$ of $r\times r$ matrices.
The principal symbol of $P$ is a matrix-valued polynomial in the formal variable $\xi=(\xi_1, \dots, \xi_n)\in \RR^n$,
\[ \sigma(P)(x,\xi) = \sum_{\alpha_1+\cdots +\alpha_n= d} a_\alpha(x)\,\xi_1^{\alpha_1}\,\cdots\, \xi_n^{\alpha_n}\; \in M_k(\CC)\]
By definition, the operator $P$ is elliptic if the matrix $\sigma(P)(x,\xi)\in M_k(\CC)$ is invertible for all $\xi\ne 0$, and at all points of $X$.
If $X$ is a compact manifold without boundary, an elliptic operator on $X$ has finite dimensional kernel and cokernel.
The index of $P$ is
\[ \Ind P = \dim\ker P - \dim\coker P= \dim\ker P - \dim\ker P^*\]
Interpreting $(x,\xi)$ as  coordinates for the cotangent bundle $T^*X$,
$\sigma(P)$ has an invariant interpretation as an endomorphism of vector bundles
\[ \sigma(P) : \pi^*E\to \pi^*F \qquad \pi:T^*X\to X\]
where $\pi^*E, \pi^*F$ are $E, F$ pulled back to $T^*X$.
Since $\sigma(P)$ is invertible outside the zero section of $T^*X$, which is a compact subset, the triple $(\pi^*E, \pi^*F,  \sigma(P))$ determines an element in the Atiyah-Hirzebruch $K$-theory group of $T^*X$,
\[ [\sigma(P)] \in K^0(T^*X)\]
The Chern character
\[ \ch: K^j(T^*X)\otimes \QQ \to \bigoplus_{k=0}^n H_c^{2k+j}(T^*X, \QQ) \qquad j=0,1\]
is a natural ring isomorphism
of (rational) $K$-theory and cohomology with compact supports.
\begin{proposition}\cite{AS63}\label{eqn:AS}
If $P$ is an elliptic operator on a closed manifold $X$ then
\[ \Ind\, P =  \{\ch(\sigma(P))\cup \Td(\pi^*TX\otimes \CC)\}[T^*X]\]
where $\Td$ is the Todd class,  $TX\otimes \CC$ the complexified tangent bundle of $X$,
and $[T^*X]\in H_{2n}(T^*X,\QQ)$ the fundamental cycle for $T^*X$, canonically oriented as a symplectic manifold.
\end{proposition}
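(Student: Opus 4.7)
The plan is to prove the equality of two homomorphisms $K^0(T^*X) \to \ZZ$: the \emph{analytic index} $\Ind_a$ sending $[\sigma(P)]$ to $\Ind\, P$, and the \emph{topological index} $\Ind_t$ defined by the right-hand side of the formula. One first checks that $\Ind_a$ is well defined on $K$-theory: $\Ind\, P$ is stable under lower-order perturbations of $P$ and under homotopy through elliptic symbols, and direct sums of operators correspond to sums in $K$-theory, yielding a group homomorphism.

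To set up a more tractable topological index, I would fix an embedding $X \hookrightarrow \RR^N$; the induced closed embedding $i \colon T^*X \hookrightarrow T^*\RR^N \cong \RR^{2N}$ has normal bundle canonically a complex vector bundle (the complexification of $\pi^*\nu$, where $\nu$ is the normal bundle of $X$ in $\RR^N$). Composing the $K$-theoretic Gysin map $i_!$ with Bott periodicity $K^0(\RR^{2N}) \cong \ZZ$ yields a map $\Ind_t'$, independent of the embedding by a standard homotopy argument. The central task is then to prove $\Ind_a = \Ind_t'$. The classical strategy axiomatises both as natural transformations characterised by (i) normalization on a point, (ii) excision under open embeddings, and (iii) compatibility with $K$-theoretic push-forward along closed embeddings of manifolds. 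Properties (i) and (ii) are straightforward on both sides and (iii) holds for $\Ind_t'$ by construction. The main obstacle is verifying (iii) for $\Ind_a$: given an elliptic operator $P$ on $X$ and an embedding $j\colon X \hookrightarrow Y$, one must produce an elliptic operator on $Y$ whose principal symbol represents $j_!([\sigma(P)])$ and whose index equals $\Ind\, P$. Classically this is achieved by twisting $P$ by a Bott/Dirac-type family along the normal directions of $j$, combined with a multiplicativity statement for the index in fibre bundles. As emphasised in the introduction, Connes' tangent groupoid $\TTX$ provides a more conceptual route by realising $\Ind_a$ as the boundary map of a natural $K$-theory exact sequence, thereby reducing (iii) to an entirely topological calculation.

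Once $\Ind_a = \Ind_t'$ is established, the cohomological formula follows from the Grothendieck-Riemann-Roch compatibility of the Chern character with Gysin maps: for any closed embedding $i\colon Z \hookrightarrow W$ of smooth manifolds with complex normal bundle,
\[ \ch(i_!u) \cup \Td(TW\otimes \CC) = i_*\bigl(\ch(u) \cup \Td(TZ\otimes \CC)\bigr). \]
Applying this to $i\colon T^*X \hookrightarrow \RR^{2N}$ (where $\Td(T\RR^{2N}\otimes\CC) = 1$), using the complex isomorphism $T(T^*X) \cong \pi^*(TX \otimes \CC)$ induced by the symplectic form, and pairing with the fundamental class of $\RR^{2N}$ to convert the Gysin image into integration over $[T^*X]$, delivers the formula in the statement. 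The hard step in the whole argument is the analytic incarnation of axiom (iii); everything else is either formal or is a routine characteristic class computation.
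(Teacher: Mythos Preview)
Your outline is correct and is precisely the $K$-theoretic proof of \cite{AS1} that the paper alludes to in Section~3: one identifies the analytic index with the Gysin map $p_!:K^0(T^*X)\to\ZZ$ (the paper's Proposition~\ref{prop:AS1}), and then a purely topological Riemann--Roch calculation (cited to \cite{AS3}) yields the cohomological formula. The paper, being a survey, does not spell out these steps itself.

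Where the emphasis differs is in how one establishes $\Ind_a=p_!$. You sketch the original axiomatic argument (normalization, excision, compatibility with push-forward along embeddings), correctly flagging axiom~(iii) for the analytic index as the hard step. The paper instead foregrounds Connes' tangent-groupoid proof (Section~4): the adiabatic deformation in $\TTX$ directly identifies the analytic index with the map $(\ev_1)_*\circ(\ev_0)_*^{-1}$, and equality with $p_!$ is then checked via the Baum--Connes isomorphism for $\TTX$---bypassing the embedding-axiomatic framework entirely. Your aside that the tangent groupoid ``reduces (iii) to a topological calculation'' slightly misstates this: Connes' argument does not go through the embedding axioms at all but gives a different, self-contained identification of $\Ind_a$ with $p_!$. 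The paper also notes a third route (Section~5.2) via Kasparov's Proposition~\ref{prop:KK} and geometric $K$-homology, which reduces everything to the Dirac case and a single explicit verification. Any of these establishes Proposition~\ref{prop:AS1}; your GRR derivation of the characteristic-class formula from it is the standard one and matches what the paper cites.
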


The formula of Atiyah and Singer answered a question of Gel\cprime fand, who proposed in \cite{Ge60} the investigation of the relation between homotopy invariants and analytical invariants of elliptic operators.
Moreover, a number of classical theorems in geometry and topology are special cases of the Atiyah-Singer index formula.
If we take
\[ P=d+d^* : C^\infty(X,\Lambda^{\rm even} T^*M) \to C^\infty(X,\Lambda^{\rm odd} T^*M)\]
with $d$ the deRham differential, we recover the curvature formula of Gauss-Bonnet for the Euler number of a closed oriented surface, and its generalization to higher dimensional manifolds due to Hopf and Chern.
With
\[P=\delbar+\delbar^* :  C^\infty(X,\Lambda^{\rm even} T^{1,0}M) \to C^\infty(X,\Lambda^{\rm odd} T^{1,0}M)\]
with $\delbar$ the Dolbeault operator, we get  the Riemann-Roch theorem for Riemann surfaces and Hirzebruch's generalization  to projective algebraic varieties.
Finally, if $P$ is the signature operator we obtain Hirzebruch's signature theorem.

\section{The Gysin map}

The original proof of Proposition \ref{eqn:AS} followed the strategy of Hirzebruch's proof of the signature theorem,
 relying on the calculation of  cobordism groups by Ren\'e Thom.
The proof published in \cite{AS1} is independent of cobordism theory, and makes use of Atiyah-Hirzebruch $K$-theory.

If $X$ is compact Hausdorff, $K^0(X)$ is the abelian group generated by isomorphism classes of complex vector bundles on $X$.
Vector bundles naturally pull back along continuous maps.
$K$-theory is a cohomology theory for locally compact Hausdorff spaces,
and a contravariant functor for proper maps $f:X\to Y$.
If $X, Y$ are smooth manifolds, then
$K$-theory is also {\em covariant} for (not necessarily proper)  smooth maps $f:X\to Y$ that are $K$-oriented.
A $K$-orientation of a smooth map $f:X\to Y$ is a spin$^c$ structure for the vector bundle $TX\oplus f^*TY$.
A $K$-orientation of $f$  induces a wrong-way functorial Gysin map
\[ f_! :K^\bullet(X)\to K^\bullet(Y)\]
In three special cases, the construction of the Gysin map is straightforward.
If $f:U\to V$ is the inclusion of $U$ as an open submanifold of $V$, then  $f_!$ is the map in analytic $K$-theory determined by the inclusion  $C_0(U)\subset C_0(V)$.
If $E$ is the total space of a spin$^c$ vector bundle on $Z$,
and $f:Z\to E$ is the zero section,
then $f_!$ is the $K$-theory Thom isomorphism.
Similarly, if $f:E\to Z$ is the base point projection of a spin$^c$ vector bundle, then $f_!$ is the inverse of the Thom isomorphism.
In general, every $K$-oriented map can be factored as a composition of three maps of the above type.

\vskip 6pt
The cotangent bundle  $T^*X$ of a closed smooth manifold $X$ has a canonical symplectic form, and is therefore a spin$^c$ manifold.
Thus, the map $p:T^*X\to {\rm point}$ is $K$-oriented, and we have a Gysin map
\[ p_!:K^0(T^*X) \to K^0({\rm point}) =\ZZ\]
\begin{proposition}\label{prop:AS1}\cite{AS1}
If $P$ is an elliptic operator on a closed manifold $X$ then
\[ \Ind\,P = p_!(\sigma(P))\]
\end{proposition}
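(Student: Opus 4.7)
The plan is to deduce Proposition \ref{prop:AS1} from the cohomological index formula, Proposition \ref{eqn:AS}, using the Grothendieck--Riemann--Roch compatibility between the Chern character and the Gysin push-forward in topological $K$-theory. Since $p_!(\sigma(P)) \in K^0(\mathrm{pt}) = \ZZ$, the Chern character acts on it as the identity, so it suffices to compute $\ch(p_!(\sigma(P)))$ and match the result with the right-hand side of Proposition \ref{eqn:AS}.

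The central step is the Riemann--Roch identity
$$\ch(p_!(x)) = p_*^{\mathrm{coh}}\bigl(\ch(x) \cup \Td(\tau)\bigr), \qquad x\in K^0(T^*X),$$
where $p_*^{\mathrm{coh}}$ denotes evaluation on the canonical fundamental class $[T^*X]$ and $\tau$ is the tangent bundle of $T^*X$ equipped with the almost-complex structure coming from its symplectic spin$^c$ orientation. I would verify this by factoring $p$ as a composition of the three elementary $K$-oriented maps listed in the excerpt: an open inclusion, a zero section of a spin$^c$ bundle, and a base-point projection of a spin$^c$ bundle. The identity is immediate for open inclusions; for the two Thom-isomorphism cases it follows, via the splitting principle, from the classical fact that the Chern character of the $K$-theoretic Thom class of a spin$^c$ bundle $E$ equals the cohomological Thom class of $E$ multiplied by the Todd class of the complexification of $E$.

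To match formulas one must then identify $\Td(\tau)$ with $\pi^*\Td(TX \otimes \CC)$. Along the zero section, $\tau|_X \cong TX \oplus T^*X$ as real bundles, and the symplectic-compatible almost-complex structure presents this as a complex bundle isomorphic to $TX \otimes_{\RR} \CC$ (after fixing a Riemannian metric to identify $T^*X$ with $TX$). Since $\pi : T^*X \to X$ is a homotopy equivalence, the equality $\Td(\tau) = \pi^*\Td(TX \otimes \CC)$ extends from the zero section to all of $T^*X$. Plugging this into the Riemann--Roch identity and invoking Proposition \ref{eqn:AS} gives
$$p_!(\sigma(P)) = \bigl\{\ch(\sigma(P)) \cup \Td(\pi^*TX \otimes \CC)\bigr\}[T^*X] = \Ind P.$$

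The main obstacle is the Grothendieck--Riemann--Roch identity itself. Its proof requires not only the Thom-isomorphism computation but also the verification that the $K$-theoretic Gysin map $f_!$ is well-defined (independent of the choice of factorization of $f$ into elementary maps) and functorial under composition of $K$-oriented maps. Once that machinery is in place, the identification of characteristic classes and the comparison with Proposition \ref{eqn:AS} are routine.
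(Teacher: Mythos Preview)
Your argument is internally sound, but it runs in the opposite logical direction from the paper. The paper treats Proposition~\ref{prop:AS1} as the primary $K$-theoretic statement and explicitly notes that the cohomological formula (Proposition~\ref{eqn:AS}) is \emph{derived from it} by the topological calculation in \cite{AS3} --- essentially the same Riemann--Roch computation you outline. By assuming Proposition~\ref{eqn:AS} and running that computation backwards, you are not giving an independent proof of Proposition~\ref{prop:AS1}; you are only establishing the equivalence of the two formulations and then appealing to the original cobordism proof of Proposition~\ref{eqn:AS}. That is legitimate mathematics, but it misses the point of \cite{AS1}, which was precisely to avoid cobordism theory.

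The paper's own route to Proposition~\ref{prop:AS1} (sketched in Section~\ref{sec:tangent}) is genuinely different: Connes' tangent groupoid $\TTX$ produces a $C^*$-algebra whose evaluation maps at $t=0$ and $t=1$ give, respectively, $K^0(T^*X)$ and $K_0(\KK(L^2X))\cong\ZZ$, and the composite $(\ev_1)_*\circ(\ev_0)_*^{-1}$ is shown to equal \emph{both} the analytic index and the Gysin map $p_!$. The identification with $p_!$ uses the Baum--Connes isomorphism for $\TTX$, replacing the groupoid by a classifying space where the map becomes topologically computable. No passage through rational cohomology or the Chern character is needed. Your approach trades this deformation/groupoid machinery for Grothendieck--Riemann--Roch plus an external proof of the index theorem; the paper's approach proves the $K$-theoretic equality directly and treats the cohomological formula as a corollary.
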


A purely topological calculation, carried out  in \cite{AS3},  shows that the characteristic class formula of Proposition \ref{eqn:AS} follows from Proposition \ref{prop:AS1}.

\section{The tangent groupoid}\label{sec:tangent}

In \cite[II.5]{Co94} Connes gives a short and conceptually elegant proof that the map $(\ev_1)_\ast\circ (\ev_0)_\ast^{-1}:K^0(T^*X)\to \ZZ$ determined by the tangent groupoid agrees with the Gysin map $p_!:K^0(T^*X)\to \ZZ$.
This proves Proposition \ref{prop:AS1}.
Connes' proof makes use of the Baum-Connes isomorphism for the tangent groupoid $G=\TTX$,
\[ K^0(BG)\cong K_0(C^*(G))\]
(see section \ref{sec:BC}). Here $BG$ is a classifying space for free and proper actions of $G$.
This isomorphism replaces the groupoid $G=\TTX$ by the topological space $BG$,
which  turns   the index map into one that can be computed topologically.

In subsequent work by many authors, the tangent groupoid has proven  to be a very useful tool in index theory.
Analogs of the tangent groupoid have been constructed to deal with index problems in various contexts.
As a random (neither exhaustive nor fully representative)  sample of publications ranging from 1987 to 2018, see \cite{HS87} \cite{MP97} \cite{NWX99} \cite{ANS06} \cite{CR09} \cite{vE10a} \cite{Hi10} \cite{DS18}.

\begin{remark}
The tangent groupoid formalism is intimately connected to  deformation theory.
The $C^*$-algebra $C^*(\TTX)$ is the algebra of sections of a continuous field of $C^*$-algebras over $[0,1]$ with fiber $C_0(T^*X)$ at $t=0$ and $\KK(L^2(X))$ at all $t>0$.
This field encodes a strong deformation quantization of the algebra of classical observables (functions on the symplectic phase space $T^*X$)  to quantum observables (operators on $L^2(X)$).
\end{remark}

\section{$K$-homology}

The wrong-way functoriality of the Gysin map  suggests that the topological index is perhaps more naturally regarded as a map in $K$-homology.
Atiyah-Hirzebruch $K$-theory is associated to the Bott spectrum,
\[ K^j(X) = [X, K_j]\qquad K_{2j} = BU\times \ZZ, \; K_{2j+1}=U\]
Abstractly,  $K$-homology $K_j(X)$ is then defined  as the stable homotopy group
of  the smash product of $X$ with the Bott spectrum,
\[ K_j^{top}(X) = \pi_j(X\wedge \underline{K})=\lim_{\lra} \pi_{j+k} (X\wedge K_k)\]
$K$-homology is naturally covariant.
The Gysin map $p_!:K^0(T^*X)\to \ZZ$ in $K$-theory corresponds in $K$-homology to the map determined by $\epsilon:X\to {\rm point}$,
\[ \epsilon_* : K_0^{top}(X)\to K_0^{top}({\rm point})\cong \ZZ\]
composed with the  Poincar\'e duality isomorphism
\[ K^0(T^*X)\cong K_0^{top}(X)\]

\subsection{Analytic $K$-homology}

The homotopy theoretic definition of $K$-homology lacks a concrete model for cycles in $K_j(X)$ that is useful in calculations.
In \cite{At69} Atiyah shows that  elliptic operators can represent $K$-cycles.
If $P$ is an elliptic operator on a closed manifold $X$,
and $V$ is a complex vector bundle on $X$,
we can twist $P$ by $V$ to obtain a new elliptic operator
\[ P_V : C^\infty(M,E\otimes V)\to C^\infty(M,F\otimes V)\]
Thus, $P$ determines a homomorphism
\[ K^0(X) \to \ZZ\qquad [V]\mapsto \Ind \,P_V\]
In \cite{At69} Atiyah  axiomatized the notion of elliptic operator such that it still makes sense  if $X$ is not  a smooth manifold, but  only a  compact Hausdorff space.
Following Atiyah, an element in the even $K$-homology group $K_0(X)$ is represented by a bounded Fredholm operator  $F:H^0\to H^1$,
where the Hilbert spaces $H^j$ are equipped with a $\ast$-representation $\phi_j:C(X)\to \BB(H^j)$,
and $F$ is {\em `an operator on X'} in the sense that
\begin{equation}\label{eqn:Atiyah}
F\phi_0(f) - \phi_1(f)F: H^0\to H^1 \;\text{is a compact operator for all} \; f\in C(X)
\end{equation}
This axiom guarantees that the index of the operator $F$ twisted by a vector bundle $V$ on $X$ is a well-defined integer.
In fact, twisting implements the cap product, i.e.
the structure of $K$-homology as a module over the $K$-theory ring,
\[ \cap : K^0(X)\times K_0(X)\to K_0(X) \qquad [V] \cap [P]= [P_V]\]
For an elliptic operator $P:C^\infty(E)\to C^\infty(F)$, the Hilbert spaces $H^0, H^1$ are appropriate Sobolev spaces of sections in $E, F$, such that $P:H^0\to H^1$ is a bounded Fredholm operator,
and Atiyah's axiom is satisfied because of the Rellich Lemma.
The correct equivalence relation for Atiyah's group of abstract elliptic operators was identified by Gennadi Kasparov \cite{Ka75}.
The  Atiyah-Kasparov  analytic $K$-homology of $X$ is the $KK$-group $KK(C(X),\CC)$.
For finite $CW$-complexes $X$ there is an isomorphism
\[ KK(C(X),\CC)\cong K_0^{top}(X)\]
Kasparov developed the theory far beyond Atiyah's  ansatz.
For two separable (possibly noncommutative) $C^*$-algebras $A, B$, elements in the abelian group $KK(A,B)$ are represented by generalized Fredholm operators  $F:H^0\to H^1$.
Now $H^0, H^1$ are Hilbert $C^*$-modules over $B$, and $\phi_j:A\to \LL(H^j)$ are $\ast$-representations of $A$ by adjointable operators on $H^j$.
$F$ is invertible modulo ``compact operators'' in the sense of the theory of operators on Hilbert-modules.
The  power of $KK$-theory rests on the existence of an associative product
\[ \# : KK(A,B)\times KK(B,C)\to KK(A,C)\]
For example, $K$-theory is
\[ K_0(A)\cong KK(\CC,A)\]
and an element in the bivariant group $\xi \in KK(A,B)$
induces, by right composition, a map of $K$-theory groups
\[ K_0(A)\to K_0(B) : \alpha\mapsto \alpha\# \xi\]
Kasparov showed that  Proposition  \ref{eqn:AS} of Atiyah-Singer can be understood
in $K$-homology as follows.
The Dirac operator $D_{T^*X}$ of the (symplectic) spin$^c$ manifold $T^*X$ determines a class in analytic $K$-homology,
\[ [D_{T^*X}]\ \in KK(C_0(T^*X), \CC)\]
and hence determines a map $K^0(T^*X)\to \ZZ$.
This map is equal to the  topological index of Atiyah-Singer.
\footnote{This is a restatement of Bott's version of the Atiyah-Singer formula in his review of \cite{AS63} for the AMS mathematical reviews.}
But the  principal symbol $\sigma(P)$ of an elliptic operator $P$ determines not only an element in $K$-theory,
\[ [\sigma(P)]\in KK(\CC, C_0(T^*X))\]
but in fact gives the slightly better
\[ [\sigma(P)]\in KK(C(X), C_0(T^*X))\]

\begin{proposition} \cite[section 24.5]{Bl98}\label{prop:KK}
The element in analytic $K$-homology determined by an elliptic operator $P$ on a closed manifold $X$
is the product
\[ [P] = [\sigma(P)]\# [D_{T^*X}]\in KK(C(X),\CC)\]
\end{proposition}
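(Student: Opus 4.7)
The plan is to combine the Atiyah-Singer topological index formula (Proposition \ref{prop:AS1}) with a separation argument in $KK$-theory. First I would spell out the three cycles concretely: $[\sigma(P)]$ is represented by a Kasparov $(C(X), C_0(T^*X))$-module built from sections of $\pi^*E \oplus \pi^*F$ with the pullback $C(X)$-action (via $\pi: T^*X\to X$) and the normalized principal symbol $\sigma(P)(1+|\sigma(P)|^2)^{-1/2}$; $[D_{T^*X}]$ is the standard Kasparov cycle of the Dirac operator on the symplectic spin$^c$ manifold $T^*X$; and $[P]$ is the Atiyah-Kasparov cycle $(L^2(X,E)\oplus L^2(X,F),\,m,\,P(1+P^*P)^{-1/2})$, where $m$ denotes multiplication.

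Second, I would verify that the two sides agree on every pairing with $K^*(X)$. For any vector bundle $V\to X$, compatibility of the symbol map with twisting (combined with functoriality of $\pi^*$) gives $[V]\#[\sigma(P)]=[\sigma(P_V)]$ in $K^0(T^*X)$. Associativity of the Kasparov product then yields
\[ [V]\#\bigl([\sigma(P)]\#[D_{T^*X}]\bigr) \;=\; [\sigma(P_V)]\#[D_{T^*X}], \]
which by Proposition \ref{prop:AS1} equals $p_!(\sigma(P_V))=\Ind P_V = [V]\#[P]$. An analogous argument on suspensions handles pairings with $K^1(X)$.

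The main obstacle is promoting this pointwise agreement to equality of classes in $KK(C(X),\CC)$. For $X$ a finite $CW$-complex, $C(X)$ lies in the bootstrap category and Kasparov's universal coefficient theorem provides a short exact sequence expressing $KK(C(X),\CC)$ in terms of $\Hom$ and $\mathrm{Ext}$ of $K_*(C(X))$ with $\ZZ$. The pairing argument establishes triviality of the difference on the $\Hom$ part, and one controls the $\mathrm{Ext}$ part by a refined pairing on suspensions, or, more conceptually, by invoking Kasparov's Poincar\'e duality theorem for the spin$^c$ manifold $T^*X$, which identifies $KK(C(X),\CC)$ with $K^0(T^*X)$ via right multiplication by $[D_{T^*X}]$ and sends $[P]$ to $[\sigma(P)]$. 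Alternatively, and more analytically, one can give a direct construction of the product cycle using an unbounded Connes--Skandalis connection that lifts $D_{T^*X}$ to the symbol module for $\sigma(P)$; the hardest step then is verifying the positivity (connection) condition, which amounts to a pseudodifferential calculation tying the symplectic geometry of $T^*X$ to the operator $P$ on $X$ and is most transparently packaged via the tangent groupoid deformation of Section \ref{sec:tangent}.
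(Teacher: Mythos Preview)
The paper does not prove this proposition; it is simply quoted with a reference to \cite[section 24.5]{Bl98}, so there is no in-paper argument to compare against.

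On its own merits, your pairing argument has a gap. You assert that $[\sigma(P_V)]\#[D_{T^*X}]$ equals $p_!(\sigma(P_V))$ ``by Proposition \ref{prop:AS1}'', but that proposition only asserts $\Ind P_V = p_!(\sigma(P_V))$; it says nothing about the Kasparov product with $[D_{T^*X}]$. To close the loop you must already know that right multiplication by $[D_{T^*X}]$ on $K^0(T^*X)$ coincides with the Gysin map $p_!$, which is itself a nontrivial index statement (essentially Proposition \ref{prop:Gysin} with $B$ a point, or one half of Kasparov's Poincar\'e duality). Once that is granted, the UCT detour is superfluous and the Poincar\'e-duality remark becomes circular: knowing that $\cdot\,\#[D_{T^*X}]$ is an isomorphism $K^0(T^*X)\to KK(C(X),\CC)$ does not by itself tell you that the preimage of $[P]$ is $[\sigma(P)]$---that is precisely the statement to be proved.

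The argument in the cited reference is in fact your ``alternative'': one computes the Kasparov product $[\sigma(P)]\#[D_{T^*X}]$ directly, constructing an explicit connection for $D_{T^*X}$ on the symbol module and then exhibiting an operator homotopy to the Atiyah--Kasparov cycle for $P$. The pseudodifferential calculus on $X$ (equivalently, the tangent-groupoid deformation you mention) is what identifies the resulting product operator with an order-zero operator having normalized principal symbol $\sigma(P)$. Your sketch names this route but stops short of the connection/positivity verification, which is where the actual work is.
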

The importance of $KK$-theory to index theory is hard to overstate.
For an excellent  first introduction to $KK$-theory see \cite{Hi90}.

\subsection{Geometric $K$-homology}
A geometric model for cycles in $K$-homology was developed by Paul Baum and Ron Douglas \cite{BD80}.
In their theory, a $K$-cycle is represented by a triple $(M,V,\varphi)$,
where $M$ is a closed spin$^c$ manifold,
$V$ is a complex vector bundle on $M$,
and $\varphi:M\to X$ is a continuous map.
The non-trivial aspect of the theory is the equivalence relation on such triples (see \cite{BD80} for details).
The groups $K_0^{\rm geo}(X)$ and $K_1^{\rm geo}(X)$ consist of equivalence classes of triples $(M,V,\varphi)$ with $M$ even or odd dimensional, respectively.
Remarkably, Baum-Douglas geometric $K$-homology is equivalent to topological $K$-homology for {\em all} topological spaces $X$ \cite{Ja98},
\[ K_j^{\rm geo}(X) \cong \pi_j(X\wedge \underline{K})\]
A proof of the special case of the Atiyah-Singer index theorem for Dirac operators using geometric $K$-homology is as follows.
Proposition \ref{prop:KK} and \ref{prop:Dirac} together imply Proposition \ref{eqn:AS}.

\begin{proposition}\label{prop:Dirac}
If $M$ is a spin$^c$ manifold with Dirac operator $D$, and $V$ is a smooth complex vector bundle on $M$, then
\[ \Ind\, D_V = \{\ch(V)\cup \Td(M)\}[M]\]
where $\Td(M)$ is the Todd class of the spin$^c$ vector bundle $TM$.
\end{proposition}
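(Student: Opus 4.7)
The plan is to prove Proposition \ref{prop:Dirac} by the McKean-Singer formula combined with Getzler's rescaling argument. Since $D_V^2$ is a positive elliptic second-order operator on the $\ZZ/2$-graded bundle $S \otimes V$ of twisted spinors, the heat operator $e^{-tD_V^2}$ is of trace class for $t > 0$, and
\[ \Ind D_V = \Tr_s\, e^{-tD_V^2} = \int_M \mathrm{str}\, k_t(x,x)\,\mathrm{dvol}(x), \]
where $k_t$ denotes the pointwise heat kernel with values in $\End(S \otimes V)_x$. Because the left-hand side is independent of $t$, the strategy is to compute the integrand in the limit $t \to 0^+$ and identify it with the top-degree component of $\Td(M) \wedge \ch(V)$.

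First I would record the Lichnerowicz identity for the twisted spin$^c$ Dirac operator,
\[ D_V^2 = \nabla^* \nabla + \tfrac{1}{4} s + \tfrac{1}{2} c(F^L) + c(F^V), \]
where $s$ is the scalar curvature, $F^L$ the curvature of the determinant line bundle of the spin$^c$ structure, and $F^V$ the curvature of $V$. This exhibits $D_V^2$ as a generalized Laplacian, and standard heat kernel theory then provides a small-time asymptotic expansion $\mathrm{str}\, k_t(x,x) \sim \sum_{k \geq -n/2} t^k a_k(x)$, with each coefficient a universal local expression in the curvatures and their covariant derivatives at $x$.

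The crux of the argument is Getzler's rescaling. Working in geodesic normal coordinates centered at an arbitrary point $x_0$, with synchronous trivializations of the spinor and twisting bundles, one simultaneously dilates the spatial variables by $t^{-1/2}$ and rescales Clifford generators in accordance with the natural filtration of the Clifford algebra. In the limit $t \to 0^+$, the rescaled Dirac Laplacian converges to a model harmonic oscillator on $\RR^n$ whose coefficients depend only on $R^{TM}(x_0)$, $F^L(x_0)$, and $F^V(x_0)$. The heat kernel of this model is given exactly by Mehler's formula, and evaluating its supertrace at the origin produces
\[ \lim_{t \to 0^+} \mathrm{str}\, k_t(x,x) = \bigl[\hat{A}(TM) \wedge e^{c_1(L)/2} \wedge \ch(V)\bigr]_{\mathrm{top}}(x_0). \]

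By definition of the Todd class of a spin$^c$ structure, $\hat{A}(TM) \wedge e^{c_1(L)/2} = \Td(M)$, so integrating the limiting density over $M$ yields $\{\ch(V) \cup \Td(M)\}[M]$. The main obstacle is the Getzler rescaling computation itself: one must justify that the rescaled heat equation genuinely converges, in an appropriate sense, to the harmonic oscillator model, and extract the corresponding Mehler form in the supertrace. Once this analytic step is in place, the identification of the resulting differential form with $\Td(M) \wedge \ch(V)$ is a routine algebraic manipulation of characteristic classes.
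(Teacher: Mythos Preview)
Your heat-kernel argument via Getzler rescaling is correct and is a standard proof of the index theorem for twisted spin$^c$ Dirac operators, but it is a genuinely different route from the one the paper takes. The paper's proof is entirely global and $K$-theoretic: it observes that both $(M,V)\mapsto \Ind D_V$ and $(M,V)\mapsto \{\ch(V)\cup\Td(M)\}[M]$ descend to homomorphisms $K_0^{\rm geo}({\rm point})\to\ZZ$, invokes the (purely topological) fact that $K_0^{\rm geo}({\rm point})\cong\ZZ$, and then concludes by checking a single example such as $\delbar$ on $\CC P^1$. The nontrivial ingredients there are bordism invariance of the index and Bott periodicity, not any asymptotic analysis. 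Your approach, by contrast, does real work at each point of $M$ and produces more than the index formula: it yields the canonical local index density $[\Td(M)\wedge\ch(V)]_{\rm top}$, which is precisely what one needs for extensions to manifolds with boundary or to families. The paper's approach buys conceptual economy and fits its narrative about geometric $K$-homology; yours buys locality and is in fact the analytic machinery the paper alludes to later when discussing the McKean--Singer formula and the Connes--Moscovici local index theorem.
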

\begin{proof}
The geometric $K$-homology of a point $K^{\rm geo}_0({\rm point})$
consists of pairs $(M,V)$, with $M$ a spin$^c$ manifold with complex vector bundle $V$.
There is an isomorphism of abelian groups,
\[ K^{\rm geo}_0({\rm point})\cong \ZZ\qquad (M,V) \mapsto \{\ch(V)\cup \Td(M)\}[M]\]
The proof that this is an isomorphism is purely topological, and Bott periodicity plays a key role.
There is a second homomorphism, namely the analytic index
\[ K^{\rm geo}_0({\rm point})\to \ZZ\qquad (M,V) \mapsto  \Ind\, D_V\]
The proof that this map is well-defined relies  on bordism invariance of the index of Dirac operators.
To prove that the two homomorphisms of abelian groups $K^{\rm geo}_0({\rm point})\to \ZZ$ are equal,
it suffices to verify this by direct calculation in one example where $D_V$ has non-zero index (for example  the Dolbeault operator of $\CC P^1$).
(See \cite{BvE18} for details.)

\end{proof}

The point of view proposed in  \cite{BD80} is that index theory, in general, is based on the equivalence between analytic and geometric $K$-homology.
For a compact manifold $X$ (or, more generally, a finite $CW$-complex), there is an isomorphism \cite{BHS07}
\[ \mu:K_0^{\rm geo}(X)\cong  KK(C(X),\CC)\]
If $(M,V,\varphi)$ is a geometric $K$-cycle for $X$, the spin$^c$ manifold $M$ has a Dirac operator $D$;
$D$ twisted by $V$ determines an element in analytic $K$-homology $[D_V]\in KK(C(M),\CC)$;
finally $\phi:M\to X$ maps this element to
\[ \mu(M,V,\varphi) = \varphi_*(D_V) \in KK(C(X),\CC)\]
In \cite{BD80}, Baum and Douglas conceptualize index theory as follows:
\begin{quote}
\it Given an element in analytic $K$-homology, $\xi\in KK_j(C(X),\CC)$, explicitly compute the unique $\tilde{\xi}$ in geometric $K$-homology, $\tilde{\xi}\in K_j^{\rm geo}(X)$, corresponding to $\xi$.
\end{quote}
In concrete terms, this  proposes that  index problems are solved by reduction to the index of a Dirac operator, possibly on a different manifold---as, for example, in Proposition \ref{prop:KK}.
This perspective on index theory was used, more recently,  to solve the index problem for hypoelliptic  operators in the Heisenberg calculus on contact manifolds \cite{BvE14}.
This is a class of (pseudo)differential operators that are Fredholm, but not elliptic.
Yet these operators satisfy Atiyah's axiom (\ref{eqn:Atiyah}), and therefore determine an analytic $K$-cycle.
The index problem for this class of hypoelliptic operators was solved by computing an equivalent geometric $K$-cycle.

\section{Elliptic families}

The suggestion in \cite{BD80}  that index problems are solved by reduction to Dirac operators is intimately related to  the point of view that the topological index is a Gysin map in $K$-theory.
To see  the  connection between the two perspectives, we consider the families index theorem of Atiyah and Singer \cite{AS4}.

Suppose $P_b$ is a smooth family of elliptic operators on a closed manifold $X$, parametrized by points $b\in B$ in another compact manifold $B$.
Alternatively, we may think of such an elliptic family $\{P_b,b\in B\}$ as a single differential operator $P$ on $X\times B$ that differentiates  along  the fibers of $X\times B\to B$.
The kernel $V_b=\ker\,P_b$ of each individual operator $P_b$ is a finite dimensional vector space.
If the dimension of $V_b$ is independent of $b\in B$,
then the collection $\{V_b,b\in B\}$ defines a vector bundle $V=\ker\, P$ on $B$.
The same holds for the cokernels of $P_b$, and the index of the elliptic family $P$ is a formal difference of two vector bundles on $B$,
\[ \Ind \,P =[\ker\,P]-[\coker\,P]\in K^0(B)\]
The construction can be modified so that it makes sense even if the dimension of $V_b$ is not constant.
See \cite{AS4}, where the product $X\times B$ is generalized to fiber bundles $Z\to B$, and $B$ is allowed to be a compact Hausdorff space.

The analog of Proposition \ref{prop:KK} holds for families.
As a single operator, $P:H^0\to H^1$ is a generalized Fredholm operator, where $H^0, H^1$ are  Hilbert modules over the $C^*$-algebra $C(B)$.
As such, $P$ determines a class in $KK(\CC,C(B))$.
The families index of $P$ is precisely the isomorphism
\[ \Ind: KK(\CC, C(B))\cong K^0(B)\]
If $V\to Z$ is a complex vector bundle on $Z$,
then each elliptic operator $P_b$ can be twisted by the restriction $V|Z_b$.
We  obtain the twisted family $P_V$, whose families index is an element in $K^0(B)$.
Thus, $P$ determines a group homomorphism
\[ [V]\to \Ind P_V:K^0(Z)\to K^0(B)\]
This homomorphism is the product, in $KK$-theory, with a $KK$-element determined by the elliptic family $P$,
\[  \Ind P_V = [V]\# [P]\qquad [P]\in KK(C(Z),C(B))\]
As with a single operator, the principal symbol of an elliptic family on $Z\to B$ is an element in $K$-theory,
\[ [\sigma(P)] \in K^0(T_{\rm vert}^*Z)\]
where $T_{\rm vert}Z$ is the bundle of vertical tangent vectors, i.e. the kernel of the map $d\pi:TZ\to TB$.
We have in fact
\[ [\sigma(P)] \in KK(C(Z), C_0(T_{\rm vert}^*Z))\]
The cotangent bundles $T^*Z_b$ of the fibers $Z_b$ are symplectic and hence spin$^c$ manifolds.
The family of Dirac operators $D_{T^*Z_b}$ is an elliptic family parametrized by $b\in B$,
and determines  a $KK$-element
\[ [D_{T^*_{\rm vert}Z}] \in KK(C_0(T^*_{\rm vert} Z), C(B))\]
Then, as in Kasparov's Proposition \ref{prop:KK}, a version of the families index theorem is
\[ [P] =  [\sigma(P)]\# [D_{T^*_{\rm vert}Z}] \in KK(C(Z), C(B))\]
To derive the families index theorem of \cite{AS4}, we need a topological characterization of the $KK$-element $[D_{T^*_{\rm vert}Z}]$.
This topological characterization is provided by the Gysin map.

\begin{proposition}\label{prop:Gysin}
Let  $\pi:W\to B$ be a submersion of manifolds.
Assume that $\ker d\pi\subset TW$ is a spin$^c$ vector bundle, so that each fiber $W_b$ is a spin$^c$ manifold, and $\pi$ is $K$-oriented.
If   $D$ is the elliptic family of Dirac operators $D_b$ of  $W_b$,
then the $KK$-element
\[ [D]\in KK(C_0(W),C(B))\]
has the property that the corresponding map in $K$-theory
\[ K^0(W)\to K^0(B): [V]\mapsto [V] \# [D]=\Ind\, D_V\]
is  the Gysin map
\[  \pi_!: K^0(W)\to K^0(B)\]
\end{proposition}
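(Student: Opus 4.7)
The plan is to prove the proposition by the standard factorization strategy: write $\pi$ as a composition of three elementary $K$-oriented maps of the types for which the Gysin construction is explicit (open inclusion, zero section of a spin$^c$ vector bundle, projection of a trivial spin$^c$ vector bundle), check that the fiberwise Dirac class $[D]$ factors correspondingly under the Kasparov product, and reduce to two model computations.

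First I would produce the factorization. Choose a fiberwise proper embedding $i:W\hookrightarrow B\times\RR^N$ over $B$ for some large $N$; such an $i$ exists by a relative embedding argument. Since $T_{\rm vert}W=\ker d\pi$ is spin$^c$ by hypothesis and $T_{\rm vert}W\oplus\nu\cong W\times\RR^N$, the normal bundle $\nu\to W$ of $i$ inherits a canonical spin$^c$ structure from the $K$-orientation of $\pi$. A tubular neighborhood yields an open embedding $\iota:\nu\hookrightarrow B\times\RR^N$, and
\[ \pi = p\circ\iota\circ z, \]
with $z:W\hookrightarrow\nu$ the zero section and $p:B\times\RR^N\to B$ the projection. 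The wrong-way functoriality of the Gysin map then gives $\pi_!=p_!\circ\iota_!\circ z_!$, where $z_!$ is the Thom isomorphism for $\nu$, $\iota_!$ is induced by the inclusion $C_0(\nu)\hookrightarrow C_0(B\times\RR^N)$, and $p_!$ is the inverse Thom isomorphism for the trivial bundle.

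The corresponding analytic claim is a Kasparov-product decomposition
\[ [D] = \tau_\nu\#[\iota]\#[D_{\RR^N}]\in KK(C_0(W),C(B)), \]
where $\tau_\nu\in KK(C(W),C_0(\nu))$ is the Kasparov-Thom class of $\nu$ (realizing $z_!$), $[\iota]$ is the class of the $C^*$-algebra inclusion $C_0(\nu)\hookrightarrow C_0(B\times\RR^N)$ (realizing $\iota_!$), and $[D_{\RR^N}]\in KK(C_0(B\times\RR^N),C(B))$ is the class of the fiberwise Euclidean Dirac on $p:B\times\RR^N\to B$, which by Kasparov's families Bott periodicity represents $p_!$. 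Granting this decomposition and the two base-case identifications (Dirac on a spin$^c$ vector bundle equals the Thom class, Euclidean Dirac equals Bott), right multiplication by $[D]$ coincides with $\pi_!$ on $K^0(W)$.

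The main obstacle is establishing the multiplicativity of the fiberwise Dirac construction under composition of $K$-oriented submersions: whenever $\pi=\pi_1\circ\pi_2$, one needs $[D_\pi]=[D_{\pi_2}]\#[D_{\pi_1}]$. This is the families version of the classical identity that the Kasparov product of two Dirac classes is the Dirac class on the total space, and it is what allows one to unbundle $[D]$ along the factorization $\pi=p\circ(\iota\circ z)$ above. Its proof proceeds by choosing an Ehresmann connection on $\pi_1$, forming the rescaled combined operator $tD_{\rm hor}+D_{\rm vert}$ on the total space, and verifying the Connes-Skandalis criterion for the unbounded Kasparov product (positivity of the graded commutator of $D_{\rm vert}$ with $D_{\rm hor}$ modulo bounded terms, together with the compact-resolvent condition). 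Once this multiplicativity lemma is in place, combining it with the Thom and Bott identifications above gives the proposition, and specializing to $T^*_{\rm vert}Z\to B$ yields the families index theorem of the preceding section.
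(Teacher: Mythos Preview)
The paper does not actually prove this proposition: it simply records that it is a special case of the families index theorem of Atiyah--Singer \cite{AS4}, and then uses it together with the Kasparov decomposition $[P]=[\sigma(P)]\#[D_{T^*_{\rm vert}Z}]$ to recast the general families index theorem in $KK$-form. Your proposal takes a genuinely different route: a direct $KK$-theoretic argument that does not invoke \cite{AS4} at all. You factor $\pi$ through a fiberwise embedding into $B\times\RR^N$, reduce to the Kasparov--Thom and Bott identifications, and isolate the multiplicativity of the fiberwise Dirac class under composition of $K$-oriented submersions as the one substantive lemma. This is essentially the Connes--Skandalis/Kasparov strategy and is correct in outline; the multiplicativity step is nontrivial but your sketch via the Connes--Skandalis connection criterion is the standard way to handle it. What your approach buys is self-containment within $KK$-theory, so that your closing remark---that the proposition then \emph{yields} the families index theorem---is a genuine deduction rather than circular; the paper's route is much shorter but imports the full classical proof of \cite{AS4} as a black box.
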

Proposition \ref{prop:Gysin} is  a special case of the families index theorem in \cite{AS4}.
Together with the equality $[P] =  [\sigma(P)]\# [D_{T^*_{\rm vert}Z}]$,
it gives the  index theorem for general elliptic families as,
\[ \Ind P = \pi_!(\sigma(P))\in K^0(B)\qquad \pi:T^*_{\rm vert}Z\to B\]

\section{The adiabatic groupoid}

The tangent groupoid formalism can be adapted to the families index problem.
The relevant   generalization of Connes' tangent groupoid is developed by Nistor, Weinstein, and Xu in  \cite{NWX99}.
For an an arbitrary Lie groupoid $G$,
let $T^sG$ be the vector bundle on $G$ of vectors tangent to the source fibers, i.e. vectors in the kernel of $ds:TG\to TG^{(0)}$, where $s:G\to G^{(0)}$ is the source map of the groupoid $G$.
Let  $AG$ be  the restriction of $T^sG$ to the space of objects $G^{(0)}\subset G$.
The vector bundle $AG$ has the structure of a Lie algebroid, but this is not relevant for our purposes here.

A ``blow-up'' construction that closely follows \cite[II.5]{Co94} produces a groupoid
\[ G_{\rm ad} = AG \times \{0\} \,\sqcup\, G\times (0,1]\]
called the {\em adiabatic groupoid} of $G$.
Algebraically, $G_{ad}$ is the disjoint union of a family of copies of $G$ parametrized by $t\in (0,1]$
with the vector bundle $AG$ as the boundary at $t=0$.
Connes' tangent groupoid is the special case with $G=X\times X$ and $AG=TX$.

As in section  \ref{sec:tangent} above,
the adiabatic groupoid gives rise to a map in $K$-theory
\[ K^0(A^*G)\to K_0(C^*(G))\]
from the $K$-theory of the vector bundle $A^*G$ to the $K$-theory of the Lie groupoid $G$.
This map can be interpreted as an analytic index.
If $P$ is a (pseudo)differential operator on $G$ that differentiates only in the direction of the fibers of the source map $s:G\to G^{(0)}$, and if $P$ is right invariant,
then if the principal symbol $\sigma(P)(x,\xi)$ is invertible  for all groupoid units $x\in G^{(0)}$ and all $\xi\in A^*G$ with $\xi\ne 0$, then  $P$ has an index,
\[ \Ind \,P \in K_0(C^*(G))\]
The principal symbol of $P$  determines a class in topological $K$-theory
\[ [\sigma(P)]\in K^0(A^*G)\]
The analytic index map determined by the adiabatic groupoid maps $[\sigma(P)]$ to $\Ind \,P$.
For differential operators, the proof of these facts is essentially the same as the proof of Proposition \ref{prop:g-ind}.

\begin{example}
Given a smooth fiber bundle $\pi:Z\to B$,
consider the Lie groupoid
\[ G=Z\times_B Z = \{ (z,z')\in Z\times Z\mid \pi(z)=\pi(z')\}\]
with multiplication $(z,z')(z',z'')=(z,z'')$.
The Lie algebroid of $G$ is $AG=T_{\rm vert}Z$.
The groupoid $Z\times_B Z$ is Morita equivalent to the manifold $B$ (as a Lie groupoid in which all elements are units), and the adiabatic groupoid of $G$ determines an index map
\[ K^0(A^*G) = K^0(T^*_{\rm vert}Z)\to K^0(B) \cong K_0(C^*(G))\]
This map is the analytic index map for families of elliptic operators on the fibers of $Z\to B$.
\end{example}

\section{Foliations}

A foliation of a smooth manifold $Z$ is a subbundle $F\subseteq TZ$ such that $[F,F]\subseteq F$.
Equivalently, $Z$ is a disjoint union of immersed submanifolds $L\subset Z$, called the leaves of the foliation,
such that at every point $x\in L$ for any leaf $L\subset Z$ we have $T_xL = F_x$.
The leaf space   is the set of leaves with the quotient topology inherited from $Z$.
If  $\pi:Z\to B$ is a fibration, then $Z$ is foliated with $F=T_{\rm vert}Z=\ker d\pi$.
The leafs are the fibers $Z_b=\pi^{-1}(b)$, and the leaf space is $B$.
In general, the leaf space of a foliation may have pathological topology.
For example, if $Z$ contains at least one leaf $L$ that is dense in $Z$ (e.g. the Kronecker foliation of a torus), then the leaf space has only two open sets.

The holonomy groupoid $G_{Z/F}$ of a foliated manifold $(Z,F)$ is a Lie groupoid
whose elements are equivalence classes of paths $\gamma:[0,1]\to L$ connecting  points $x=s(\gamma), y=r(\gamma)$ in the same leaf $L$.
Two paths are equivalent if they have the same holonomy (see \cite{Co80}).
Multiplication in $G_{Z/F}$ is composition of paths.
In simple cases where there is no holonomy (e.g. the Kronecker foliation),
the groupoid $G_{Z/F}$ is algebraically a disjoint union of the pair groupiods $L\times L$ of the leaves.
If the  leaf space (with its quotient topology) is Hausdorff, then it is Morita equivalent to the groupoid $G_{Z/F}$.
But in general  the holonomy groupoid $G_{Z/F}$ (as a ``smooth stack'')  is a better representation of the space of leaves than the leaf space with its quotient space.

By definition, the $K$-theory of the leaf space of a foliation is the $K$-theory of the reduced $C^*$-algebra of the holonomy groupoid,
\[ K^0(Z/F):= K_0(C_r^*(G_{Z/F}))\]
The Lie algebroid of $G_{Z/F}$ is $AG=F$, as a vector bundle over $G^{(0)}=Z$.
Thus, by the general procedure of  \cite{NWX99}, we have an analytic index
\[ K^0(F^*)\to K^0(Z/F)\]
This map generalizes the analytic index of  elliptic families.
It is defined, for example, for differential operators on $Z$ that differentiate in the leaf direction,
and are elliptic along the leafs.
Such operators are called {\em longitudinally elliptic}.

Assume that the leaves
are even dimensional spin$^c$ manifolds.
This means that $F$ is a spin$^c$ vector bundle of even rank, and so in particular we have the Thom isomorphism,
\[ K^0(F^*)\cong K^0(Z)\]
The spin$^c$ Dirac operators of the leaves form a longitudinally elliptic family $D$ on $Z$.
Given a vector bundle $V$ on $Z$, we can twist the Dirac operator of each leaf $L$
by the restriction  $V|L$, and we obtain a new elliptic family $D_V$.
The longitudinal index problem consists in identifying a ``topological index''
that equals the analytic index map
\[ K^0(Z)\to K^0(Z/F)\qquad V\mapsto \Ind D_V\]
The solution was suggested in  \cite{Co80} and proven in  \cite{CS84} by Connes and Skandalis.
A  smooth maps between foliated manifolds
\[ f:Z_1/F_1\to Z_2/F_2\]
is defined as a smooth groupoid homomorphism $G'_{Z_1/F_1}\to G_{Z_2/F_2}$,
where  $G'_{Z_1/F_1}$ is a groupoid that is Morita equivalent to $G_{Z_1/F_1}$.
(This is a morphism of stacks.)
The notion of $K$-orientation naturally extends to such maps.
The difficulty is showing that such a $K$-oriented map determines a Gysin map in $K$-theory
that is (wrong-way) functorial,
\[ f_!:K^\bullet(V_1/F_1)\to K^\bullet(V_2/F_2)\]
If $F$ is a spin$^c$ vector bundle, then the identity map $l:Z\to Z$ is $K$-oriented when considered as a morphism from the manifold $Z$ to the foliation $Z/F$.
In the case of a fiber bundle $\pi:Z\to B$, where $Z/F=B$,
this is just the map $\pi$.
The topological index for longitudinally elliptic operators is the Gysin map
\[ l_!:K^0(Z)\to K^0(Z/F)\]
Thus, the index theorem of Connes-Skandalis for longitudinally elliptic operators generalizes Proposition  \ref{prop:Gysin}.

\section{The Baum-Connes conjecture}\label{sec:BC}

Among the most significant developments of index theory in the context of noncommutative geometry is the Baum-Connes conjecture.
To fit it in the framework discussed so far,
consider  a manifold  $X$ with fundamental cover $p:\tilde{X}\to X$.
The fundamental groupoid of $X$ is the Lie groupoid
\[ \tilde{X}\times_X \tilde{X} = \{(a,b)\in \tilde{X}\times \tilde{X}\mid p(a)=p(b)\}\]
with multiplication $(a,b)(b,c)=(a,c)$.
Equivalently, elements of the fundamental groupoid are homotopy classes of paths $\gamma:[0,1]\to X$ (with no base point),
and  multiplication is composition of paths.
The fundamental groupoid is Morita equivalent to the fundamental group $\Gamma=\pi_1(X)$, and so
\[ K_0(C^*(\tilde{X}\times_X \tilde{X})) \cong K_0(C^*(\Gamma))\]
The Lie algebroid of $\tilde{X}\times_X \tilde{X}$ is  $TX$, i.e. it is the same as the Lie algebroid of the pair groupoid $X\times X$.
The adiabatic groupoid of $\tilde{X}\times_X \tilde{X}$ thus gives rise to an index map in $K$-theory
\[ K^0(T^*X) \to K_0(C^*(\Gamma))\]
If $P$ is an elliptic operator on $X$,
then $P$ lifts to a $\Gamma$-equivariant elliptic operator $\tilde{P}$ on $\tilde{X}$.
The analytic index maps the symbol $[\sigma(P)]\in K^0(T^*X)$
to the $\Gamma$-index of $\tilde{P}$.
By Poincar\'e duality, we may  conceive of this $\Gamma$-index as a map
\[  K_0(X) \to K_0(C^*(\Gamma))\]
(where $K_0(X)$ is $K$-homology, $K_0(C^*(\Gamma))$ is $K$-theory).

Now let $\Gamma$ be an arbitrary countable discrete group.
The classifying space $B\Gamma$ for principal $\Gamma$-bundles is not generally a smooth manifold,
but one can still define a generalized  index map
\[ \mu :K_0(B\Gamma) \to K_0(C^*(\Gamma))\]
called the assembly map.
By composition with the natural map $C^*(\Gamma)\to C^*_r(\Gamma)$,  we may replace the full $C^*$-algebra $C^*(\Gamma)$ on the right-hand side by the reduced $C^*$-algebra $C^*_r(\Gamma)$,
\[ \mu_r :K_0(B\Gamma) \to K_0(C^*_r(\Gamma))\]
The conjecture asserts that $\mu_r$ is an isomorphism if $\Gamma$ is a countable discrete group without torsion elements \cite{Ka84}.
If $\Gamma$ has torsion, the left hand side of the conjecture  is  replaced
by the $\Gamma$-equivariant $K$-homology  (with $\Gamma$-compact supports) of the classifying space $\underline{E}\Gamma$ for {\em proper} (instead of principal) $\Gamma$-actions.
If $\Gamma$ is torsion free then $K_0^\Gamma(\underline{E}\Gamma)\cong K_0(B\Gamma)$.
In general, the assembly map
\[ \mu:K_j^\Gamma(\underline{E}\Gamma)\to K_j(C^*_r(\Gamma))\qquad j=0,1\]
is conjectured to be an isomorphism for {\em any} second countable locally compact  group $\Gamma$ \cite{BCH94}.
Early versions of the Baum-Connes conjecture concerned the $K$-theory of groupoids (the holonomy groupoid of a foliation  \cite{Co80} and crossed product groupoids \cite{BC88}),
but we now have counterexamples for the conjecture for groupoids.
For groups, to this day no counterexample has been found, and the conjecture has been verified for large classes of groups.
Since injectivity of the assembly map for a discrete group $\Gamma$ implies the Novikov higher signature conjecture,
the Baum-Connes conjecture is a development within noncommutative geometry
with significant implications in algebraic topology.


\section{Cohomological index formulas and local index theory}
One can obtain index formulas for the index of a single operator or an elliptic family by applying the Chern character to $K$-theoretical index formulas.
A different analytic approach has been proposed by Atiyah-Bott \cite{Atiyh-Bott}. It is based on the following fundamental observation. Let $P$, as before, be an elliptic operator on a manifold $X$ acting between sections of bundles $E$ and $F$. We can then form two positive operators $P^*P$ and $P P^*$ and  consider  two $\zeta$-functions $\Tr (1+P^*P)^{-s}$ and $\Tr (1+P^*P)^{-s}$. Both of these functions are  are defined and holomorphic for $\Re s$ sufficiently large and admit meromorphic continuation to the entire complex plane. Moreover, they are holomorphic at $s=0$ and the value of each $\zeta$-function at $s=0$ can be computed by an explicit integral of a well defined local density,  see \cite{seely}. The operators $P^*P$ and $P P^*$ have the same sets of nonzero eigenvalues with the same multiplicities. The multiplicities of $0$ as an eigenvalue on the other hand differ for two operators, and the difference of multiplicities is precisely the
index of operator $P$. It follows that for any $s$ with $\Re s$ sufficiently large
\[
\Tr (1+P^*P)^{-s} - \Tr (1+P P^*)^{-s} = \ind P
\]
Writing $\zeta(s):= \Tr (1+P^*P)^{-s} - \Tr (1+P P^*)^{-s}$ we thus obtain by analytic continuation
\[\ind P = \zeta (0).\]
It is useful to rewrite these considerations in the following $\mathbb{Z}_2$ -graded notations. Let
\begin{equation}\label{Hilb}
 \mathcal{H}: = L^2(X, E)\oplus L^2(X, F)
\end{equation} be a Hilbert space. On $\mathcal{H}$ one has a grading operator $\gamma$ given by the matrix $\begin{bmatrix}
                                                           1 & 0 \\
                                                           0 & -1
                                                         \end{bmatrix}$ with respect to the decomposition  \eqref{Hilb} and an operator $D = \begin{bmatrix}
                                                                                                                                               0 & P^* \\
                                                                                                                                               P & 0
                                                                                                                                             \end{bmatrix}$.
In these terms we can write $\zeta(s) = \Tr \gamma(1+D^2)^{-s}$.
McKean and Singer \cite{McKean-Singer} proposed using a a closely related formula
\[
\ind P= \Tr \gamma e^{-tD^2},
\]
which holds for any $t>0$, for index computation. Since the trace of the heat kernel $e^{-tD^2}$ admits an asymptotic expansion as $t\to 0^+$ with the coefficients being canonically defined
local densities $X$ this formula allows in principle to compute the index as an integral over $X$.
Explicitly, the index formula for Dirac-type operators has been obtained by computing  the local densities using invariant theory in \cites{Gilkey, Atiyah-Bott-Patodi}. Later  significantly simpler methods allowing direct calculations of the local index density were found   in \cites{Getzler1, Getzler2, Berlin-Vergne}, cf. \cite{bgv}.
It is important to note that this approach provides not just cohomology classes appearing in a cohomological index formula, but rather canonically defined differential forms representing these classes, thus justifying the name local index formulas. This locality has been exploited to extend index theory e.g. to manifolds with boundary \cite{APS}.
The extension of local index techniques to the elliptic families case has been carried out in \cite{Bismut} based on the superconnection formalism proposed in \cite{Quillen}. Further extension of these results to the case of foliations presents an additional difficulty. Since the algebras involved are noncommutative, de Rham theory is no longer a proper receptacle for the Chern character in $K$-theory. As we will see, this role is played by cyclic theory.
Finally, we mention that a different approach to local index theory based on the study of cyclic theory of deformation quantization algebras has been developed by R. Nest and B. Tsygan. For their approach as well as some applications we refer to the original papers \cites{nt1, nt2, nt3, bnt}.

\section{Cyclic complexes}	
In this section we give a very brief overview of periodic cyclic homological and cohomological complexes, mostly to fix the notations. The standard reference for this material is \cite{loday}.

For a complex unital algebra $\ca$ set $C_l(\ca):= \ca \otimes(\ca/(\mathbb{C} \cdot 1))^{\otimes l}$, $l \ge 0$.
For a topological algebra, e.g. a normed algebra, one needs to take an appropriately completed tensor product. One defines differentials $b\colon C_l(\ca) \to C_{l-1}(\ca)$ and $B \colon C_l(\ca) \to C_{l+1}(\ca)$ by
\[
b (a_0\otimes a_1\otimes \ldots a_l):= \sum_{i=0}^{l-1} (-1)^ia_0\otimes \ldots a_ia_{i+1}\otimes \ldots a_l+(-1)^la_la_0\otimes a_1\otimes \ldots a_{l-1}
\]
\[
B (a_0\otimes a_1\otimes \ldots a_l):= \sum_{i=0}^{l} (-1)^{li}1 \otimes a_i\otimes a_{i+1}\otimes \ldots a_{i-1}\text{ (with $a_{-1}:=a_l)$}.
\]
One verifies directly that $b$, $B$ are well defined and satisfy $b^2=0$, $B^2=0$, $Bb+bB=0$. We will be primarily interested in the periodic cyclic complex, which is a totalization of a bicomplex defined as follows. Set $C_{kl}(\ca):= C_{l-k}(\ca)$, $k, l \in \mathbb{Z}$ ($C_{kl}(\ca)=0$ if $k>l$) and note that $b$, $B$ define maps $b \colon C_{kl}(\ca) \to C_{k, (l-1)}(\ca) $ and  $ B\colon C_{kl}(\ca) \to C_{(k-1), l}(\ca) $. $ \left(C_{\bullet, \bullet}(\ca), b, B\right)$ is thus a bicomplex.
To obtain the periodic cyclic complex it is totalized as follows:
\[CC^{per}_i(\ca):= \prod_{k+l=i} C_{kl}(\ca)
\]
with the differential $b+B \colon CC^{per}_i(\ca) \to CC^{per}_{i-1}(\ca)$. Note $CC^{per}_i(\ca)=  CC^{per}_{i+2m}(\ca)$ for every $m \in \mathbb{Z}$, and therefore the complex is (indeed) $2$-periodic. Its homology is denoted by $HC^{per}_0(\ca) (\cong HC^{per}_{2m}(\ca))$ and $HC^{per}_1(\ca) (\cong HC^{per}_{1+2m}(\ca))$, $m \in \mathbb{Z}$.
We will write a chain in $CC^{per}_i(\ca)$, $i=0$ or $1$ as
$ \alpha = \sum_{m=0}^{\infty} \alpha_{i+2m}, \text{ where }  \alpha_k \in C_k(\ca)$.

Two important examples of classes in cyclic homology are the following. Let $e \in M_n(\ca)=\ca \otimes M_n(\mathbb{C})$ be a idempotent. Then
\[
\Ch(e):=\tr e+\sum_{l=1}^{\infty} (-1)^l \frac{(2l)!}{l!} \tr \left(e-\frac{1}{2}\right)\otimes e^{\otimes 2l} \in \prod_{l\ge 0} C_{-l, l}(\ca)=CC^{per}_0(\ca)
\]
where $\tr \colon (\ca \otimes M_n(\mathbb{C}))^{\otimes k} \to \ca^{\otimes k}$ is the map given by
\[
\tr (a_0\otimes m_0)\otimes (a_1\otimes m_1) \otimes \ldots (a_k\otimes m_k)= (\tr m_0m_1\ldots m_k) a_0  \otimes a_1 \otimes \ldots a_k.
\]
The class of $\Ch (e)$ in $HC^{per}_0(\ca)$ depends only on the class of $e$ in $K_0(\ca)$ and thus defines the Chern character morphism
\[
\Ch \colon K_0(\ca) \to HC^{per}_0(\ca)
\]
Similarly, if $u \in M_n(\ca)$ is invertible on can define
\[
\Ch(u):= \frac{1}{\sqrt{2 \pi i}} \sum_{l=0}^{\infty}(-1)^l l! \tr (u^{-1}\otimes u)^{\otimes (l+1)} \in CC^{per}_1(\ca)
\]
This defines  a homomorphism
\[
\Ch \colon K_1(\ca) \to HC^{per}_1(\ca).
\]
from  topological $K$-theory to  periodic cyclic cohomology in the odd case.
Note that one can define Chern characters for $K_0$ and $K_1$ by the same formulas in the case of algebraic $K$-theory as well, but in the case of $K_1$ the target of the Chern character should be a different flavor of cyclic theory, the negative cyclic homology.
\begin{example}
Let $\ca= C^\infty(X)$ where $X$ is a compact manifold. We have a map $\lambda \colon C_k(\ca) \to \Omega^k(X)$ given by
\[ \lambda (a_0\otimes a_1 \otimes \ldots a_k) = \frac{1}{k!} a_0da_1\ldots da_k
\]
which intertwines the differential $B$ with de Rham differential $d$ and $b$ with $0$. It follows that if we consider a bicomplex $\mathcal{D}_{kl}: = \Omega^{l-k}(X)$ with the differentials given by $d$ and $0$, the map $\lambda$ will induce a morphism of bicomplexes $C_{kl}(\ca)$ and $\mathcal{D}_{kl}$. It can be shown that this map is a quasi-isomorphism \cite{cn85} and hence $HC^{per}_i(C^\infty(X)) \cong \oplus_{m \in \mathbb{Z}} H^{i+2m} (X)$. The map $K^i(X) \cong K_i(C^\infty(X))  \xrightarrow{\Ch}HC^{per}_i (C^\infty(X)) \xrightarrow{\lambda}\oplus_{m \in \mathbb{Z}} H^{i+2m} (X)$ recovers the ordinary Chern character in $K$-theory (up to normalization).

\end{example}
Dually, one considers cyclic cohomology. Set $C^k(\ca) := (C_k(\ca))'$ -- the space of continuous multilinear functionals $\phi(a_0, a_1,\ldots, a_k)$ on $\ca$ with the property that if for some  $i \ge 1$  $a_i=1$, then $\phi(a_0, a_1,\ldots, a_k)=0$. The transpose of the differentials $b$, $B$ induce maps, also denoted $b$, $B$: 
$b \colon C^k(\ca) \to C^{k+1}(\ca)$, $B \colon C^k(\ca) \to C^{k-1}(\ca)$. One then forms a bicomplex $C^{kl}(\ca):= C^{l-k}(\ca)$, $k, l \in \mathbb{Z}$ with the differentials $b \colon C^{kl}(\ca) \to C^{k, (l+1)}(\ca) $ and  $ B\colon C^{kl}(\ca) \to C^{(k+1), l}(\ca) $. Again dually to the homological case it is totalized using the direct sums rather than products:
\[CC_{per}^i(\ca):= \bigoplus_{k+l=i} C^{kl}(\ca).
\]
Here again the complex we consider is $2$-periodic. An even (resp. odd) cyclic cochain is thus given by a collection of multilinear functionals
$\phi_k=\phi_k (a_0,\ldots, a_k) \in C^k(\ca)$, $k=0,2 \ldots$ (resp. $k=1,3,\ldots$), only finitely many of which are nonzero. A cochain is a cocycle if it satisfies
$b\phi_k+B\phi_{k+2}=0$.
We denote the cohomology of the periodic cyclic complex $CC_{per}^\bullet(\ca)$ by $HC_{per}^\bullet(\ca)$; these again take two distinct values $HC_{per}^0(\ca)$ and $HC_{per}^1(\ca)$.

Note that it is important  that the cyclic cohomological bicomplex is totalized by using  direct sums and not products: if we remove the requirement that only finitely may of the $\phi_k$ are nonzero we obtain a complex with vanishing cohomology. One can however obtain a nontrivial theory with infinite cochains as follows. Assume that $\ca$ is a Banach (or normed) algebra. Denote by $CC_{entire}^i(\ca)$
the space  of cochains $\phi_k \in C^k(\ca) $, $k$ same parity as $i$, satisfying
\[
\sum \sqrt{k!}\|\phi_k\|r^k <\infty \text{ for every }  r\ge 0.
\]
The space of such cochains is preserved by the differential $b+B$ and we thus obtain a complex $CC_{entire}^\bullet(\ca)$ with the cohomology denoted by
$HC_{entire}^\bullet(\ca)$. $CC_{per}^\bullet(\ca)$ is clearly a subcomplex of $CC_{entire}^\bullet(\ca)$, and we therefore have a morphism
$HC_{per}^\bullet(\ca) \to HC_{entire}^\bullet(\ca)$ induced by the inclusion map.

\section{The longitudinal index formula  in cyclic theory}

Let $(Z, F)$ be a foliated manifold, and $D$ a longitudinally elliptic operator on $Z$. Choose a (possibly disconnected) transversal $T$ to the foliation. By restricting the foliation groupoid to $T$, i.e. considering only paths which start and end in $T$ we obtain an etale groupoid $G_T$, Morita equivalent to the holonomy groupoid $G_{Z/F}$. One can consider the convolution algebra $C^\infty_c(G_T)$ of smooth compactly supported functions on $G_T$ and define the index of a leafwise family $D$ as an element
\[ \ind D \in K_0(C^\infty_c(G_T) \otimes \mathcal{R})\]
where $\mathcal{R}$ is the algebra of rapidly decaying infinite matrices.
We can apply a Chern character to obtain a class in cyclic homology $\Ch(\ind D) \in HC_0^{per}(C^\infty_c(G_T) \otimes \mathcal{R})\cong HC_0^{per}(C^\infty_c(G_T))$. To obtain  numerical information one has to pair this class with cyclic cohomology. The cyclic cohomology of  etale groupoid convolution algebras has been completely described in \cites{Brylinski-Nistor, Crainic}, based on earlier computations for group algebras and cross-products \cites{Burghelea,FT, Nistor, GJ}.
Earlier  Connes constructed a canonical imbedding $\Phi \colon H^\bullet(BG_T, \tau) \to HC_{per}^{\bullet-\dim T} ( C^\infty_c(G_T))$.
Here $BG_T$ is  the classifying space of the groupoid $G_T$ and $\tau$ is  the local system on $BG_T$ induced by the orientation bundle of $T$. We can now state Connes' index formula for a longitudinally elliptic family. Assume for simplicity that $D_V$ is a family of longitudinal Dirac operators with coefficients in a auxiliary bundle $V$.
Then we have \cite{Co94}
\[
\langle \Phi(c), \Ch(\ind D_V)  \rangle = \int_Z \widehat{A}(F) \Ch(V) \nu^*(c)
\]
Here $\nu \colon Z \to BG_T$ is the classifying map of $G_T$ and $\widehat{A}(F)$ is the $\widehat{A}$ genus of the bundle $F$.


We note that unlike the case of, for example, elliptic families, this result can't in general be deduced from the $K$-theoretical Connes-Skandalis index theorem. This is due to the fact that while $\ind D_V$ can be defined in $K_0(C^\infty_c(G_T) \otimes \mathcal{R})$, only its image in the $K$-theory of the $C^*$-completion of $C^\infty_c(G_T)$ is computed by the Connes-Skandalis index theorem. Cyclic homology and the Chern character on the other hand are nontrivial only for the smaller algebra $C^\infty_c(G_T) \otimes \mathcal{R}$.

The local index theory approach to the cohomological index formula for foliations and etale groupoids has been developed in \cite{gl1, gl2, g}.

\section{Finitely summable Fredholm modules}
\begin{definition}
  An odd Fredholm module $(\mathcal{H},F)$ over an algebra $\ca$ is given by
the following data:

A Hilbert space $\mathcal{H}$
and a representation on it of an algebra $\ca$, i.e.
a homomorphism $\pi:\ca \to \End(\mathcal{H})$.

An  operator $F$, such that
\begin{align}
&\pi(a)(1-F^2)\in \ck \text{ for every }a \in \ca \\
&\pi(a)[F,\pi(b)] \in \ck \text{ for every }a, b \in \ca
\end{align}
\end{definition}
The set of (equivalence classes) of odd Fredholm modules with  the appropriate equivalence relation (cf. \cite{Bl98})
and an operation of direct sum becomes the $K$-homology group $K^1(\ca)$.
For a unital algebra $\ca$ (but not necessarily unital representation)
one can replace the Fredholm module by an equivalent one
satisfying
\begin{align}
&1-F^2\in \ck \text{ for every }a \in \ca\\
&[F,\pi(a)] \in \ck \text{ for every }a \in \ca
\end{align}

\begin{definition}
  An even Fredholm module $\left(\mathcal{H},F,\g\right)$ over an algebra $\ca$ is
given by the following data:

An odd Fredholm module $\left(\mathcal{H},F\right)$ over an algebra $\ca$

A $\mathbb{Z}_2$ grading on the Hilbert space $\mathcal{H}$, i.e. an operator $\gamma$ satisfying
$\gamma^2=1$. This data has to satisfy the following conditions:  the operator $F$ is odd with respect to this grading, i.e.
\[
F\gamma+\gamma F=0
\]
and elements of the algebra $\ca$ are even, i.e.
\[
\pi(a) \gamma= \gamma \pi(a) \text{ for every }a \in \ca.
\]

\end{definition}
As in the odd case, equivalence classes of even Fredholm modules form the group $K^1(\ca)$.
Let $p\ge 1$.
\begin{definition}
An (odd or even) or Fredholm module is $p$-summable if the following stronger conditions hold:
\begin{align}
&\pi(a)(1-F^2)\in \lp \text{ for every }a \in \ca \\
&\pi(a)[F,\pi(b)] \in \lp \text{ for every }a, b \in \ca
\end{align}
\end{definition}
where $\lp$ is the Schatten-ideal of operators $T$ with $\Tr(|T|^p)<\infty$.
In   \cite{cn85} Connes shows that with every Fredholm module
(called pre-Fredholm module in \cite{cn85}) one can canonically, by changing the Hilbert space and representation,
associate a  different Fredholm module, satisfying
\begin{equation}\label{f2}
F^2=1
\end{equation}
representing the same $K$-homology class. If the original Fredholm
module was $p$-summable, the new one will also be $p$-summable.

With the Fredholm module satisfying \eqref{f2} Connes associates a
cyclic cocycle called the character of the Fredholm module. Choose $n>p-1$ and the same parity as the
Fredholm module. Then, viewed in the periodic cyclic bicomplex, Connes' character has only one component
of degree $n$, $\tau_n$. defined by the equations

for an even Fredholm module
\begin{equation}
\tau_n(F)(a_0,a_1,\dots,a_n)=\frac{(n/2)!}{n!}\Tr'(\g \pi(a_0)[F,\pi(a_1)]\dots[F,\pi(a_n)])
\end{equation}

and for an odd Fredholm module
\begin{equation}
\tau_n(F)(a_0,a_1,\dots,a_n)=\ci \frac{\G(n/2+1)}{n!}\Tr'(\pi(a_0)[F,\pi(a_1)]\dots[F,\pi(a_n)])
\end{equation}
Here we use the notation
$\Tr'(T)=1/2\Tr\left(F(FT+TF)\right)$. The cohomology class of the cocycle $\tau_n$ in  periodic cyclic cohomology does not depend on the choice of $n$ (of appropriate parity).

A fundamental property of the character of a Fredholm module is the following theorem of Connes \cite{cn85}.
Let $e$ be an idempotent in $M_N(\ca)$, and $(\mathcal{H},F,\g)$ be an even Fredholm
module over $\ca$. Construct a Fredholm  operator
\[F_e = \pi(e)(F \otimes 1) \pi(e) \colon \mathcal{H}^+\otimes \mathbb{C}^N \rightarrow \mathcal{H}^- \otimes \mathbb{C}^N\]
 (where $\mathcal{H}^+$ and $\mathcal{H}^-$ denote the $\pm 1$ eigenspaces of $\gamma$).
Then
\begin{theorem} Let $(\mathcal{H}, F, \gamma)$ be an even $p$-summable Fredholm module satisfying $F^2=1$, $n> p-1$. Then
\begin{equation}\label{ondex}
\ind(F_e)=\langle \tau_n(F),\Ch(e)\rangle
\end{equation}
\end{theorem}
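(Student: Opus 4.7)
The plan is to evaluate both sides of \eqref{ondex} as explicit operator traces and match them. After stabilizing (tensor the Fredholm module with $\mathbb{C}^N$) we may assume $e\in\ca$, and write $p=\pi(e)$, $F_e=pFp$. Since $\tau_n(F)$ has only one nonzero component in the periodic cyclic bicomplex, the pairing collapses onto the degree-$n$ component of $\Ch(e)$. Combining the normalization $\tfrac{(n/2)!}{n!}$ in $\tau_n$ with the coefficient $(-1)^{n/2}\tfrac{n!}{(n/2)!}$ in front of $\tr((e-\tfrac12)\otimes e^{\otimes n})$ yields
\[
\langle \tau_n(F),\Ch(e)\rangle \;=\; (-1)^{n/2}\,\Tr'\!\bigl(\gamma\,(p-\tfrac{1}{2})[F,p]^n\bigr).
\]
Because $n>p-1$ and $[F,p]\in\lp$, the factor $[F,p]^n$ already lies in $\mathcal{L}^1$, so $\Tr'$ coincides with the ordinary trace. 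A short calculation using $F\gamma=-\gamma F$, $F^2=1$, $F[F,p]F=-[F,p]$ and cyclicity yields $\Tr(\gamma[F,p]^n)=0$ for even $n$, so the $-\tfrac12$ term drops and the right-hand side simplifies to $(-1)^{n/2}\Tr(\gamma\,p[F,p]^n)$.

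Next I would establish the purely algebraic identity $p[F,p]^n=(-1)^{n/2}(p-F_e^2)^{n/2}$ for even $n$. The Leibniz rule applied to $p^2=p$ gives $p[F,p]=[F,p](1-p)$ and $[F,p]p=(1-p)[F,p]$, from which $p$ commutes with $[F,p]^2$ and
\[
p[F,p]^2 \;=\; pFpFp - p \;=\; F_e^2 - p
\]
(using $F^2=1$ to rewrite $p=pF^2p$). Since $[F,p]^2$ commutes with $p$ and $p^2=p$, raising to the power $n/2$ gives the displayed identity. Consequently the right-hand side of the theorem equals $\Tr(\gamma(p-F_e^2)^{n/2})$. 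This last expression is computed by the graded McKean--Singer/Calder\'on formula applied to $F_e^+:p\mathcal{H}^+\to p\mathcal{H}^-$ with parametrix $F_e^-$: the defects $p-F_e^{\mp}F_e^{\pm}$ are products of two commutators $[F,p]$ and hence lie in $\mathcal{L}^{p/2}$, so their $(n/2)$-th powers are trace class once $n\ge p$ (guaranteed by $n>p-1$ together with the parity constraint). The standard identity $\ind F_e=\Tr\bigl(\gamma(p-F_e^2)^{n/2}\bigr)$ then closes the argument.

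The main obstacle is the careful bookkeeping of traces of operators that are not individually trace class, which is exactly what the auxiliary trace $\Tr'$ is engineered to handle. Equivalently, one must verify that the resulting formula is independent of the auxiliary integer $n$---that $\tau_{n+2}(F)-\tau_n(F)$ is a cyclic coboundary---so that both sides genuinely detect the index rather than an $n$-dependent truncation. This homological consistency is the substantive content hidden behind the Calder\'on manipulations, and is where the $p$-summability hypothesis enters in an essential way rather than cosmetically.
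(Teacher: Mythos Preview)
The paper itself does not prove this theorem; it is quoted as a result of Connes with a reference to \cite{cn85}, so there is no in-paper argument to compare against. Your outline is essentially Connes' original proof: collapse the pairing to a supertrace of $(p-\tfrac12)[F,p]^n$, use the algebraic identity $p[F,p]^2=F_e^2-p$ (and that $p$ commutes with $[F,p]^2$) to convert $p[F,p]^n$ into $(-1)^{n/2}(p-F_e^2)^{n/2}$, and finish with the Calder\'on formula. The algebra you display is correct.

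There is, however, a real gap in the trace-class bookkeeping. You assert that $n>p-1$ forces $[F,p]^n\in\mathcal{L}^1$, but $[F,p]\in\mathcal{L}^p$ only yields $[F,p]^n\in\mathcal{L}^{p/n}$, which is trace class when $n\ge p$, not merely $n>p-1$. For $p-1<n<p$ (e.g.\ $p=2.5$, $n=2$) neither $\Tr(\gamma[F,p]^n)$ nor the Calder\'on trace $\Tr\bigl(\gamma(p-F_e^2)^{n/2}\bigr)$ is defined, so your steps 4, 5 and 9 fail as written. This borderline range is exactly why the conditional trace $\Tr'$ is introduced: with $T=\gamma(p-\tfrac12)[F,p]^n$ one computes $FT+TF=-\gamma[F,p]^{\,n+1}$, a product of $n+1$ commutators lying in $\mathcal{L}^{p/(n+1)}\subset\mathcal{L}^1$ precisely when $n+1>p$. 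Thus $\Tr'(T)$ is well defined for all $n>p-1$, but it does \emph{not} reduce to $\Tr(T)$ in the range $p-1<n<p$.

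The cleanest repair is the one you allude to at the end: run the entire computation with an even $n\ge p$, where all traces are honest, and then invoke the fact that $[\tau_n(F)]=[\tau_{n+2}(F)]$ in $HC_{per}^0(\ca)$ to cover the remaining values $n>p-1$. You correctly flag this $n$-independence as the substantive homological input; it is not an afterthought but the mechanism that closes the gap your argument leaves open.
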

Here $\Ch(e)$ is the Chern character in  cyclic homology.

Similarly in the odd case pairing of the character of a Fredholm module with a class in odd $K$-theory computes the spectral flow. More precisely, let $(\mathcal{H},F)$ be an odd Fredholm module and $u$ an invertible element of $M_N(\ca)$.  Equivalently, if $P$ is the spectral projection on the positive spectrum of $F  $ (i.e. $P=1/2(1+F)$ if $F^2=1$) consider the Fredholm operator
\[T_u:=(P\otimes 1) \pi(u) (P\otimes 1) \in \End (P \mathcal{H} \otimes
\mathbb{C}^N).\]
 Then
\begin{theorem} Let $(\mathcal{H}, F)$ be an odd $p$-summable Fredholm module satisfying $F^2=1$, $n >p-1$. Then
\begin{equation*}\ind T_u=\langle \tau_n(F), \Ch(u) \rangle\end{equation*}
\end{theorem}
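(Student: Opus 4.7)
The plan is to reduce the identity to a direct algebraic computation for a single conveniently chosen odd $n > p-1$, expand everything in terms of the projection $P = \tfrac{1}{2}(1+F)$, and then invoke the Calder\'on trace formula for the index of a Fredholm operator invertible modulo a Schatten ideal. Both sides are independent of the particular $n$: the right-hand side because the periodic cyclic cohomology class of $\tau_n(F)$ does not depend on $n$ (of correct parity with $n>p-1$), as stated above, and the left-hand side because $\Ch(u)$ is a well-defined periodic cyclic homology class. I would therefore take $n$ to be any odd integer with $n>p$, so that all multi-commutator products below belong to $\mathcal{L}^1$ in the ordinary sense and $\Tr'=\Tr$ on the operators that appear.

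Since $F^2=1$, the operator $P$ is a bounded projection. I would set $S_u := P\pi(u)P + (1-P)$, which equals $T_u$ on $P\mathcal{H}\otimes\CC^N$ and the identity on $(1-P)\mathcal{H}\otimes\CC^N$, so $\ind S_u = \ind T_u$. A short direct calculation using $\pi(u)\pi(u^{-1})=1$ then gives
\[ 1 - S_u S_{u^{-1}} \;=\; -\tfrac{1}{2}\,P\,\pi(u)\,[F,\pi(u^{-1})]\,P, \qquad 1 - S_{u^{-1}} S_u \;=\; -\tfrac{1}{2}\,P\,\pi(u^{-1})\,[F,\pi(u)]\,P. \]
By $p$-summability each commutator lies in $\lp$, so H\"older's inequality in Schatten ideals yields $(1 - S_u S_{u^{-1}})^k, (1 - S_{u^{-1}} S_u)^k \in \mathcal{L}^1$ for $k := (n+1)/2$. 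Calder\'on's index formula (a Fedosov-type generalization valid for any parametrix trivial modulo $\mathcal{L}^k$) then gives
\[ \ind T_u \;=\; \Tr\bigl((1 - S_{u^{-1}}S_u)^k\bigr) \;-\; \Tr\bigl((1 - S_u S_{u^{-1}})^k\bigr). \]

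To match this with $\langle\tau_n(F),\Ch(u)\rangle$, I would extract the degree-$n$ component of $\Ch(u)$: for $n = 2k-1$ it equals $\tfrac{1}{\cp}(-1)^{k-1}(k-1)!\,u^{-1}\otimes u\otimes u^{-1}\otimes\cdots\otimes u$ with $n+1$ factors. Substituting into the odd-case formula for $\tau_n(F)$, and then using $[F,\pi(a)] = 2(P\pi(a) - \pi(a)P)$ together with $P^2=P$, the product $\pi(u^{-1})[F,\pi(u)][F,\pi(u^{-1})]\cdots[F,\pi(u)]$ expands into a sum of monomials in $P$ and the $\pi(u^{\pm 1})$. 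Cyclicity of the trace, combined with the fact that adjacent $P$'s collapse to a single $P$, causes almost all expansion terms to cancel, leaving precisely the difference of traces appearing in Calder\'on's formula. The final bookkeeping consists in checking that the constants $\ci\,\Gamma(n/2+1)/n!$ in the definition of $\tau_n$, the factor $\tfrac{1}{\cp}(-1)^{k-1}(k-1)!$ coming from $\Ch(u)$, the sign $(-\tfrac{1}{2})^k$ arising from the expansion, and the factor $2^n$ produced by $[F,\pi(a)] = 2[P,\pi(a)]$ multiply to $1$; this is precisely the raison d'\^etre of the normalizing constants $\ci$ and $\cp$ in Connes' definitions.

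The hard part is the combinatorial step in the previous paragraph: the algebraic identity that collapses the multi-commutator trace into powers of $1 - S_u S_{u^{-1}}$ and $1 - S_{u^{-1}} S_u$. The manipulation is routine once the $P/(1-P)$ decomposition is adopted, but keeping track of signs, of the cyclic symmetry of $\Tr'$, and of the distinction between the ``$u^{-1}$-first'' and ``$u$-first'' orderings requires some care. It is at exactly this combinatorial step that the odd case diverges from the even case, and where the shifted gamma factor $\Gamma(n/2+1)$, rather than the factorial $(n/2)!$ of the even case, naturally appears.
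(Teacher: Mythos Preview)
The paper is a survey and does not prove this theorem; it is quoted from Connes' foundational paper \cite{cn85}. Your outline is precisely the classical argument one finds there, so there is nothing to compare against in the present paper.

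Two remarks on the execution. First, the ``hard'' combinatorial collapse can be done without expanding into $P/(1-P)$ monomials. Set $G:=\pi(u)^{-1}F\pi(u)$; then $\pi(u^{-1})[F,\pi(u)]=G-F$ and a two-line computation gives $[F,\pi(u^{-1})][F,\pi(u)]=-(G-F)^2$, so the full alternating product is $(-1)^{k-1}(G-F)^n=(-1)^{k-1}2^n(Q-P)^n$ with $Q=\pi(u)^{-1}P\pi(u)$. On the Calder\'on side, the identity $P(1-Q)P=P(P-Q)^2$ together with the fact that $(P-Q)^2$ commutes with $P$ gives $(1-S_{u^{-1}}S_u)^k=P(P-Q)^{2k}$; conjugating the companion term by $\pi(u)$ turns it into $Q(P-Q)^{2k}$, and subtraction yields $\Tr((P-Q)^{2k+1})$, which equals $\Tr((P-Q)^n)$ by the standard Avron--Seiler--Simon stability of odd traces of differences of projections. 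This replaces your cancellation-of-monomials step by three short identities.

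Second, a caution on the constants: with the normalizations $\ci=\sqrt{2i}$, $\cp=\sqrt{2\pi i}$ exactly as printed in this survey, a direct check in the simplest case (e.g.\ the bilateral shift with $n=1$) produces $-\ind T_u$ rather than $+\ind T_u$. Surveys routinely import formulas from several sources with slightly different sign conventions in $\Ch(u)$, in $\tau_n$, or in the pairing, so when you carry out the ``final bookkeeping'' you should fix one source and track its conventions throughout rather than trusting that the constants displayed here are mutually consistent.
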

When $\pi$ is unital one can show that $\ind T_u$ coincides with the spectral flow  between two selfadjoint operators   $F\otimes 1$ and $ \pi(u)((F\otimes
1)\pi(u)^{-1}$ acting on the space
$\mathcal{H} \otimes
\mathbb{C}^N $.

\emph{In what follows we fix the representation of $\ca$ on $\mathcal{H}$ and will therefore drop it from the notation.
Moreover, we will assume that the algebra $\ca$ is unital and its representation is unital as well.}

Assume now that we have  an even or odd Fredholm module satisfying
\begin{align}
&(1-F^2)\in \mathcal{L}^{\frac{p}{2}} \label{p/2}\\
&[F,a] \in \lp \text{ for } a \in \ca \label{p-sum}.
\end{align}
We remark that for any $p$ summable Fredholm
module one can
achieve these summability conditions by perturbing the operator $F$ and
keeping all the other data intact. Then one can directly construct cyclic cocycles
on $\ca$ representing the class of Connes' character of a Fredholm module \cite{gor}.

In the even case: choose even $n>p$. Define a periodic cyclic cocycle with components $\Ch^k_n(F)$, $k=0, 2, \ldots n$ by
\begin{multline} \label {fredchern}
\Ch^k_{n}(F)(a_0,a_1,\dots a_k)=\\
\frac{(\frac{n}{2})!}{(\frac{n+k}{2})!}\sum_{i_0+i_1+
\dots + i_k=\frac{n-k}{2}}\Tr\,\g a_0 (1-F^2)^{i_0}[F, a_1](1-F^2)^{i_1}
\dots [F,a_k] (1-F^2)^{i_k}
\end{multline}

Similarly in the odd case choose odd $n>p$. Define a periodic cyclic cocycle with components $\Ch^k_n(F)$, $k=1, 3, \ldots n$ by
\begin{multline} \label{fredchern1}
\Ch^k_{n}(F)(a_0,a_1, \dots ,a_k)=
\frac{\G(\frac{n}{2}+1)\ci}{(\frac{n+k}{2})!}\\
\sum_{i_0+i_1+
\dots + i_k=\frac{n-k}{2}}\Tr\,  a_0(1-F^2)^{i_0} [F,a_1](1-F^2)^{i_1}
\dots [F,a_k](1-F^2)^{i_k}
\end{multline}

One can slightly improve the summability assumptions and require only $n>p-1$ (rather than $n>p$) by replacing $\Tr$ with $\Tr'$ where now
$\Tr'(T)=1/2\Tr\left(F(FT+TF)\right)+\Tr(1-F^2)T$.

These cocycles represent the class of the character of a Fredholm module in the following sense. With every even $p$-summable Fredholm module $(\mathcal{H}, F, \gamma)$ one can associate by Connes' construction a Fredholm module $(\mathcal{H}', F', \gamma')$ satisfying $(F')^2=1$.
\begin{theorem} Let $(\mathcal{H}, F, \gamma)$ be a $p$-summable Fredholm module satisfying condition \eqref{p/2}. Then
\[
\left[ \Ch_n(F)\right]=\left[ \tau_n(F')\right], \ n>p-1.
\]

\end{theorem}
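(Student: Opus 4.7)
The plan is to embed both cocycles inside a one-parameter deformation produced by Connes' doubling construction, and then to prove that the $\Ch_n$ formula is homotopy-invariant via a transgression identity. Connes' construction enlarges $\mathcal{H}$ to $\mathcal{H}' = \mathcal{H} \oplus \mathcal{H}$, equips it with the grading $\gamma' = \gamma \oplus (-\gamma)$ and the representation $a \mapsto a \oplus 0$, and sets
\[ F' = \begin{pmatrix} F & T \\ T & -F \end{pmatrix}, \qquad T = \sqrt{1-F^2}, \]
so that $(F')^2 = 1$. I would consider the interpolating family
\[ F_s = \begin{pmatrix} F & sT \\ sT & -F \end{pmatrix}, \qquad s \in [0,1], \]
with $F_0 = F \oplus (-F)$ and $F_1 = F'$. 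One computes $1 - F_s^2 = (1-s^2)(1-F^2) \cdot I_{\mathcal{H}'} \in \mathcal{L}^{p/2}$ and $[F_s, a \oplus 0] \in \mathcal{L}^p$ uniformly in $s$, so \eqref{fredchern} gives a well-defined cochain $\Ch_n(F_s)$ for every $s$.

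The two endpoints of the deformation produce the two cocycles to be compared. At $s = 0$ the operator $F_0$ is block-diagonal, and since $a_j$ and $\gamma'$ act trivially on the lower block, every monomial inside the trace in $\Ch_n^k(F_0)$ has vanishing lower block; the $2 \times 2$ trace collapses to a trace on $\mathcal{H}$ and yields $\Ch_n(F_0) = \Ch_n(F)$ as cyclic cochains on $\mathcal{A}$. At $s = 1$ the identity $(F_1)^2 = 1$ forces every factor $(1-F_s^2)^{i_j}$ with $i_j > 0$ to vanish; the only surviving contribution has $k = n$ and all $i_j = 0$, and the coefficient $(n/2)!/n!$ reproduces $\tau_n$, giving $\Ch_n(F_1) = \tau_n(F')$.

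The core of the proof is then a transgression identity
\[ \frac{d}{ds}\, \Ch_n(F_s) \;=\; (b+B)\,\eta_s, \]
where $\eta_s$ is a cyclic cochain of the opposite parity, built by inserting $\dot F_s = \partial_s F_s$ into each position of the monomials in the definition of $\Ch_n^k(F_s)$ with carefully tuned combinatorial coefficients. Assuming this identity, integrating from $s=0$ to $s=1$ gives
\[ \tau_n(F') - \Ch_n(F) \;=\; (b+B)\int_0^1 \eta_s\,ds, \]
which is the desired equality in $HC_{per}^{\bullet}(\mathcal{A})$. The main obstacle is the verification of the transgression identity itself. It requires differentiating position-by-position inside each $\Ch_n^k(F_s)$, and then reorganizing the resulting terms, using graded cyclicity of the trace and the identity $[F_s, 1-F_s^2] = 0$, so as to realize them as $b\eta_s + B\eta_s$ in the bicomplex $(C^{\bullet,\bullet}(\mathcal{A}), b, B)$. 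The combinatorics parallel the Chern--Simons transgression of the Chern character of a superconnection in Quillen's formalism, but must be carried out entirely in terms of the operators $(b, B)$ on the cyclic side. Under the sharper hypothesis $n > p - 1$, one must furthermore replace $\Tr$ by $\Tr'$ and verify that every term produced during the transgression stays trace-class, as is done in \cite{gor}.
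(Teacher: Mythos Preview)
The paper is a survey and does not supply a proof of this theorem; it is stated as a result from \cite{gor}, so there is no argument in the text to compare against. That said, your outline is the natural strategy and is essentially the one carried out in \cite{gor}: interpolate through Connes' doubling construction, identify the endpoints, and prove a transgression identity for $\Ch_n(F_s)$.

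Your endpoint identifications are correct. At $s=0$ the block structure and the representation $a\mapsto a\oplus 0$ force every monomial to live in the upper-left block, and the same collapse holds for $\Tr'$ as you note. At $s=1$ the vanishing of $1-F_1^2$ kills all terms with some $i_j>0$, leaving precisely the single component $\tau_n(F')$; the modified $\Tr'$ on $\mathcal{H}'$ then agrees with Connes' $\Tr'$ since $(1-F'^2)=0$. One small point: for the family $F_s$ to satisfy $1-F_s^2=(1-s^2)(1-F^2)\cdot I_{\mathcal{H}'}$ you are tacitly assuming $[F,T]=0$, i.e.\ that $F$ is self-adjoint so that $T=\sqrt{1-F^2}$ is a function of $F$; this is harmless but should be stated.

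The genuine work, as you say, is the transgression formula $\frac{d}{ds}\Ch_n(F_s)=(b+B)\eta_s$. Your description of how it should go---differentiate slot by slot, insert $\dot F_s$, reorganize via graded trace cyclicity and $[F_s,1-F_s^2]=0$---is accurate, and the combinatorics are controlled by the specific coefficients $\frac{(n/2)!}{((n+k)/2)!}$ in \eqref{fredchern}, which are chosen precisely so that the Leibniz terms from differentiating $(1-F_s^2)^{i_j}$ assemble into $b$ and $B$ of a single cochain. This is not a gap in your plan, but it is where all the content lies; the proposal as written is a correct roadmap rather than a proof.
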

An analogous result holds in the odd case.

\section{Unbounded picture}
\begin{definition}
A $p$-summable \emph{spectral triple} (or unbounded Fredholm module) $(\ca, \mathcal{H}, D)$ consists of a unital algebra $\ca$ represented on a Hilbert space $\mathcal{H}$ and a selfadjoint operator $D$ such that
\begin{equation}\label{bddcomm}
\text{For any $a \in \ca$, $a(\Dom D) \subset \Dom D$ and $[D, a]$ is bounded, and}
\end{equation}\label{psummable}
 \begin{equation}\label{psum}
          (D^2+1)^{-1} \in \cl^{p/2}
        \end{equation}
An \emph{even} $p$-summable spectral triple is a spectral triple   $(\ca, \mathcal{H}, D)$ with the following additional data:
$\mathcal{H}$ is equipped with a $\mathbb{Z}_2$ grading given by an operator $\gamma$ satisfying $\gamma^2=1$. We denote by $\mathcal{H}^\pm$ the $\pm 1$ eigenspaces of $\gamma$. The operator $D$ is odd with respect to this grading, i.e. $\gamma (\Dom D) \subset \Dom D$ and $D\gamma+ \gamma D=0$. We will assume for simplicity that $\ca$ is not graded and represented by even operators.

Otherwise a spectral triple is \emph{odd}.
\end{definition}

\begin{example} \label{Dirac}
Let $M$ be a compact $spin^c$ manifold and $E$ the associated spinor bundle. With it one can associate a spectral triple as follows: $\mathcal{H} =L^2(E)$ is the Hilbert space of $L^2$-sections of $E$. $\ca= C^\infty(M)$ acts on $\mathcal{H}$ by multiplication. Finally $D$ is the Dirac operator on $E$. This spectral triple will be $p$-summable for every $p > dim M$. If the dimension of $M$ is even, $E$ has a natural grading which anticommutes with $D$. Thus we obtain an even finitely summable spectral triple for an even-dimensional $M$ and odd for an odd-dimensional one.
\end{example}
If $(\ca, \mathcal{H}, D)$ is a $p$-summable spectral triple, one can form an associated Fredholm module $(\ca, \mathcal{H}, F)$ where
$
F = D(D^2+1)^{-1/2}.
$
It is easy to see that $(\ca, \mathcal{H}, F)$ is $p$-summable and, moreover, satisfies condition \eqref{p/2}.
One can therefore define its character using \eqref{fredchern}, \eqref{fredchern1}. An alternative approach to the character in  this case is given by the JLO formula \cite{JLO, GS}. It is applicable in a more general context of $\theta$-summable Fredholm modules, i.e. triples $(\ca, \mathcal{H}, D)$ satisfying equation \eqref{bddcomm} and with finite summability condition \eqref{psum} replaced by
\begin{equation}\label{thetasummable}
  e^{-\theta D^2} \in \cl^1 \text{ for any } \theta >0.
\end{equation}
Note that condition \eqref{psum} implies \eqref{thetasummable} since $ e^{-\theta D^2} = (1+D^2)^{-p/2} \left((1+D^2)^{p/2}  e^{-\theta D^2}\right)$ and the operator
$(1+D^2)^{p/2}  e^{-\theta D^2}$ is bounded (by the spectral theorem).

The JLO (Jaffe-Lesniewski-Osterwalder, \cite{JLO}) formula associates with every even $\theta$-summable Fredholm module an infinite  cochain in the $(b, B)$ bicomplex of $\ca$  given for $k=0, 2, \ldots$ by
\begin{multline}\label{JLO-even}
\Ch^k(D) (a_0, a_1, \ldots a_k) =\\
\int_{\Delta^k } \Tr \gamma a_0 e^{-t_0 D^2} [D, a_1] e^{-t_1D^2} \ldots [D, a_k] e^{-t_k D^2} dt_1dt_2\ldots dt_k
\end{multline}
where $t_i\ge 0$, $i=0, \ldots, k$, $\sum t_i=1$ are the barycentric coordinates on the simplex $\Delta^k$.

The formula in the odd case is similar:
\begin{multline}\label{JLO-odd}
\Ch^k(D) (a_0, a_1, \ldots a_k) =
\ci \int_{\Delta^k } \Tr  a_0 e^{-t_0 D^2} [D, a_1] e^{-t_1D^2} \ldots [D, a_k] e^{-t_k D^2} dt_1dt_2\ldots dt_k
\end{multline}
$k=1,3,\ldots$.

\begin{theorem}(\cite{JLO})
\begin{enumerate}
  \item The cochain $\Ch_\bullet(D)$ is an entire cyclic cocycle.
  \item The cocycles $\Ch_\bullet(D)$ and $\Ch_\bullet(\epsilon D)$ are canonically cohomologous.
\end{enumerate}
\end{theorem}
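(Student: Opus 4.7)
\emph{Overall strategy.} The plan is to verify the two claims via calculations based on Duhamel's formula, following the original argument of Jaffe, Lesniewski, and Osterwalder. The entireness will come from a volume estimate on the simplex combined with Hölder's inequality for Schatten classes; the cocycle and transgression identities will come from a single master computation using the Duhamel expansion and the commutator identity $[D^2, a] = D[D,a] + [D,a]D$.

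\emph{Entireness.} Since $e^{-\theta D^2}$ is trace class for all $\theta > 0$ by \eqref{thetasummable}, and since $\sum_i t_i = 1$ on $\Delta^k$, we distribute the trace norm via Hölder's inequality applied to $e^{-t_0 D^2}, \ldots, e^{-t_k D^2}$, yielding a bound of the form
\[
|\Ch^k(D)(a_0, a_1, \ldots, a_k)| \;\le\; \frac{C}{k!}\, \|a_0\| \prod_{j=1}^{k} \|[D, a_j]\|,
\]
where the factor $1/k!$ is the volume of $\Delta^k$ and $C$ depends only on $\|e^{-D^2}\|_1$. Consequently $\sqrt{k!}\,\|\Ch^k(D)\|\,r^k \le C\,r^k/\sqrt{k!}$, which is summable for every $r > 0$, establishing the entire cochain condition.

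\emph{Cocycle property.} I will verify $b\,\Ch^{k-1}(D) + B\,\Ch^{k+1}(D) = 0$ for all $k$ by direct calculation. The two ingredients are the identity $[D^2, a] = D[D, a] + [D, a] D$ and the Duhamel expansion
\[
e^{-s D^2} a - a\, e^{-s D^2} \;=\; -\int_0^s e^{-u D^2}\,[D^2, a]\, e^{-(s-u) D^2}\, du.
\]
Applying these to reorganize $B\,\Ch^{k+1}(D)$ (where the cyclic sum inserts a $1$ in each position) produces terms in which an extra $D$ appears either to the left or to the right of an adjacent commutator $[D, a_j]$. The ``right insertion'' terms reassemble into products of the form $[D, a_j\, a_{j+1}]$ after using $[D, a_j]a_{j+1} + a_j[D, a_{j+1}] = [D, a_j a_{j+1}]$; after reindexing and pushing one slot around the cycle via cyclicity of the trace (compatibly with $\gamma$ in the even case), they match $b\,\Ch^{k-1}(D)$ with the opposite sign. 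The ``left insertion'' terms cancel pairwise within $B\,\Ch^{k+1}(D)$ itself.

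\emph{Homotopy under rescaling.} Define an entire transgression cochain $\eta^\bullet(D)$ whose degree-$k$ component is obtained from the JLO integrand by inserting, in each admissible slot, an extra factor of $D$ with the sign dictated by grading parity. A Duhamel computation parallel to the previous step yields
\[
\frac{d}{d\epsilon}\Ch^\bullet(\epsilon D) \;=\; -(b+B)\,\eta^\bullet(\epsilon D).
\]
The summability estimates for $\eta^\bullet(\epsilon D)$ are identical to those for $\Ch^\bullet$, because the extra $D$ is absorbed into a neighbouring factor $D e^{-t D^2}$, which remains Schatten-bounded. Integrating in $\epsilon$ from $1$ to the desired value produces an explicit entire cochain whose $(b+B)$-coboundary equals $\Ch^\bullet(D) - \Ch^\bullet(\epsilon D)$, proving the second claim.

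\emph{Main obstacle.} The analytic content is not difficult: the bounds rest on a single inequality $\|D^m e^{-t D^2}\|_q \le C_{m,q} t^{-m/2 - p/(2q)}$ combined with the simplex-volume factor. The real labour is the combinatorial bookkeeping in the cocycle identity: the Duhamel step produces a proliferation of terms indexed by positions on $\Delta^{k+1}$, and their pairwise cancellation requires matching signs, binomial coefficients, and boundary-face orientations exactly. Once this is done for the cocycle statement, the transgression identity is proved by the same mechanism with one extra insertion of $D$.
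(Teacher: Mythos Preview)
Your proposal is correct and aligns with the paper's treatment. The paper is a survey and does not supply a detailed proof: it cites \cite{JLO} for part (1), and for part (2) it merely records the transgression formula
\[
-\frac{d}{d\epsilon}\Ch^k(\epsilon D)=b\,\Ch^{k-1}(\epsilon D,D)+B\,\Ch^{k+1}(\epsilon D,D),
\]
together with the explicit form of $\Ch^{\bullet}(\epsilon D,D)$ as the JLO integrand with an extra $D$ inserted in each slot with alternating signs. This is exactly your cochain $\eta^\bullet$, so your transgression argument and the paper's are the same; your entireness estimate via the simplex volume $1/k!$ and H\"older for Schatten norms, and your Duhamel-based verification of the cocycle identity, are the standard JLO computations that the paper takes for granted.
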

The second statement follows from the following transgression formula:
\[
-\frac{d}{d\epsilon }\Ch^k(D) =b \Ch^{k-1}(\epsilon D, D)+B\Ch^{k+1}(\epsilon D, D)
\]
where, in the even case  $\Ch(\epsilon D, D)$ is an odd cochain given by ($k$-odd)
\begin{multline}\label{transeven}
\Ch^k(\epsilon D, D) (a_0, a_1, \ldots a_k) =
\sum_{l=0}^{k}(-1)^l \int_{\Delta^{k+1}} \Tr \gamma a_0 e^{-t_0 D^2} [D, a_1]\\ e^{-t_1D^2} \ldots  e^{-t_lD^2}D e^{-t_{l+1}D^2} \ldots [D, a_k] e^{-t_{k+1} D^2} dt_1dt_2\ldots dt_{k+1}.
\end{multline}
In the odd case $\Ch(\epsilon D, D)$ is an even cochain given by ($k$ -even)
\begin{multline}\label{transodd}
\Ch^k(\epsilon D, D) (a_0, a_1, \ldots a_k) =
\ci \sum_{l=0}^{k}(-1)^l \int_{\Delta^{k+1}} \Tr   a_0 e^{-t_0 D^2} [D, a_1]\\ e^{-t_1D^2} \ldots  e^{-t_lD^2}D e^{-t_{l+1}D^2} \ldots [D, a_k] e^{-t_{k+1} D^2} dt_1dt_2\ldots dt_{k+1}.
\end{multline}
Finally, the following result of \cite{cmt2} shows that the JLO cocycle indeed computes Connes' character in the following sense:
\begin{theorem}(\cite{cmt2})
If $(\ca, \mathcal{H}, D)$ is $p$-summable and $F= D(1+D^2)^{-1/2}$ then the image   of $[\Ch_\bullet(F)]$  in $HC_{entire}^\bullet(\ca)$ is $[\Ch_\bullet(D)]$.
\end{theorem}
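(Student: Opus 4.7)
The plan is to construct an explicit cochain $h$ in the entire cyclic complex $CC^\bullet_{entire}(\ca)$ whose $(b+B)$-coboundary is the difference $\Ch_\bullet(D) - \Ch_n(F)$, once the latter is regarded as an element of $CC^\bullet_{entire}(\ca)$ via the inclusion $CC_{per}^\bullet \hookrightarrow CC_{entire}^\bullet$. Since the JLO cocycle is written in heat-kernel language while $\Ch_n(F)$ is written in terms of $F$ and $1-F^2$, the first step is to put both on a common footing by exploiting the Laplace-transform identities
\[
(1+D^2)^{-1/2} = \tfrac{1}{\sqrt{\pi}}\int_0^\infty u^{-1/2} e^{-u} e^{-uD^2}\,du, \qquad (1+D^2)^{-1} = \int_0^\infty e^{-u} e^{-uD^2}\,du,
\]
and the corresponding expansion $F = \tfrac{1}{\sqrt{\pi}}\int_0^\infty u^{-1/2} e^{-u} D e^{-uD^2}\,du$. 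Inserting these into each component $\Ch^k_n(F)$ and iterating Duhamel's formula to move the heat operators past the $a_i$ rewrites $\Ch_n(F)$ as an iterated integral of JLO-type traces $\Tr\,\gamma\, a_0 e^{-t_0 D^2}[D,a_1]\cdots [D,a_k]e^{-t_k D^2}$ weighted by explicit gamma-function factors.

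The second step is to use the transgression formula already supplied in the paper. Rescaling $D \mapsto sD$ and integrating
\[
-\tfrac{d}{ds}\,\Ch_\bullet(sD) = (b+B)\,\Ch_\bullet(sD,D)
\]
in $s$ produces a transgression primitive relating $\Ch_\bullet(D)$ to $\lim_{s\to\infty}\Ch_\bullet(sD)$. After the change of variables $t_i \mapsto s^2 t_i$ that absorbs the rescaling into the simplex variables, this limit can be matched against the iterated-integral form of $\Ch_n(F)$ from the first step, up to a further transgression term coming from the Laplace integrals. Concatenating the two transgressions yields the desired explicit primitive $h$ for $\Ch_\bullet(D) - \Ch_n(F)$ in $CC^\bullet_{entire}(\ca)$.

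The hard part is analytic rather than algebraic. One must check that all the operator-valued integrals converge in trace norm, justify the interchange of $\Tr$ with both the Laplace integrals and the simplex integrals, and above all verify that the primitive $h$ so constructed actually lives in the entire complex, i.e. that its components satisfy the Gevrey-type bound $\sum_k \sqrt{k!}\,\|h_k\|\,r^k < \infty$ for every $r > 0$. The $p$-summability hypothesis $(1+D^2)^{-1} \in \cl^{p/2}$ is the decisive input here: it turns each factor $e^{-tD^2}$ into a trace-class operator with uniform bounds, and the $1/k!$ volume of the simplex $\Delta^k$ precisely compensates the combinatorial growth from the $k$-fold products, producing exactly the factorial decay required for entireness.
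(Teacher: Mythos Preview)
The paper does not prove this theorem. It is stated as a citation of \cite{cmt2} (Connes--Moscovici, \emph{Transgression and the Chern character of finite-dimensional $K$-cycles}), and the surrounding text only says ``the following result of \cite{cmt2} shows that the JLO cocycle indeed computes Connes' character'' before moving on. There is therefore no proof in this survey against which to compare your proposal.

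For what it is worth, your outline is broadly in the spirit of the argument in \cite{cmt2}: the key device there is indeed the transgression formula for the family $s\mapsto sD$, together with a careful analysis of the large-$s$ limit to recover the bounded-$F$ character. Your idea of expressing $(1+D^2)^{-1/2}$ and $(1+D^2)^{-1}$ via Laplace/Mellin integrals to put $\Ch_n(F)$ in heat-kernel form is a sensible bridge. The part of your sketch that is genuinely delicate and not fully justified is the matching of $\lim_{s\to\infty}\Ch_\bullet(sD)$ with $\Ch_n(F)$: in the $p$-summable case the JLO cocycle at $sD$ does not converge termwise to the finite cocycle $\Ch_n(F)$, and one has to show that the difference is a coboundary in the entire complex, not merely small. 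This is where \cite{cmt2} does real work, and your proposal would need to make that step explicit rather than asserting that the two ``can be matched.'' Likewise, the entireness estimate for the transgression primitive $\int_1^\infty \Ch_\bullet(sD,D)\,ds$ requires a bound uniform in $s$ as $s\to\infty$, which does not follow from the $1/k!$ simplex volume alone; one needs the $p$-summability to control the polynomial growth in $s$ coming from the $[sD,a_i]$ factors against the Gaussian decay of $e^{-s^2 t D^2}$.
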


\section{Locality and spectral invariants}
It is well known that indices of elliptic operators, e.g. the Dirac operator from  Example \ref{Dirac}  can be computed as integrals of well defined local densities.
All of the formulas previously described above apply in the case of Example \ref{Dirac} and in conjunction with equation \eqref{ondex} provide formulas for the
index of the Dirac operator twisted with a vector bundle. A common feature of these formulas however is that they express the index in terms of traces of operators and thus are not local. One can however obtain a local expression from these formulas e.g. by replacing $D$ by $\epsilon D$ in the JLO formula, $\epsilon>0$ and considering the limit when $\epsilon \to 0^+$.
One therefore is naturally led \cite{Co94} to the question of obtaining a local formula for Connes' Chern character for spectral triples. Invariants of noncommutative geometry naturally have a spectral nature. A prototypical example of a local spectral invariant appearing in geometry is the noncommutative residue introduced in \cite{wodz}, cf. also \cite{guil}.
We recall the definition.

Let $X$ be a compact manifold. Denote by $\Psi(X)$ the algebra of classical pseudodifferential operators of integral orders, $\Psi^k(X)$ denotes the space of operator of order $k$.
Choose a positive pseudodifferential operator $R$ of order $1$. For  $A \in \Psi(X)$ consider the function
\begin{equation*}
\zeta_A(s):= \Tr A R^{-s}.
\end{equation*}
$\zeta(s)$ is defined for $\Re(s) >\dim X + \ord A$ and admits a meromorphic extension to the entire complex plane \cite{seely}.
It has at most a simple pole at $s=0$.

Introduce
\begin{equation*}
\Res A:= \Res_{s=0} \zeta(s)
\end{equation*}
It has the following properties
\begin{enumerate}
  \item $\Res A$ does not depend on the choice of $R \in \Psi^1(X)$,
  \item $\Res$ is a trace on the algebra $\Psi(X)$:
  \begin{equation*}\Res AB = \Res BA \text{ for } A, B \in \Psi(X),\end{equation*}
  \item $\Res A = \Res B$ whenever $A-B \in \Psi^{-\dim X -1}(X) $
\end{enumerate}
The last property is a manifestation of the locality of $\Res$: it depends only on a (part of) the complete symbol of an operator.

Explicitly in local coordinates noncommutative residue can be described as follows. Let $U\subset X$ be a coordinate neighborhood, and $(x, \xi)$ -- the standard coordinates on $T^*U$.
Let
\[
a(x, \xi)\sim \sum_{k=-\infty}^{m} a_k(x, \xi)
\]
be the asymptotic expansion of the complete symbol $a(x, \xi)$ of the operator $A\in \Psi^m(M)$, $a_k(x, \xi)$ homogeneous of order $k$ on in $\xi$.
Then the expression
\[
\left(\int_{|\xi|=1} a_{-n}(x, \xi) \vol_S \right) |dx|,
\]
where $\vol_S$ denotes the normalized volume form on the unit sphere $|\xi|=1$,
defines a density on $U$ independent of the choice of local coordinates.
\[
\Res(A) = \int_X \left(\int_{|\xi|=1} a_{-n}(x, \xi)\vol_S\right) |dx|.
\]

\section{Pseudodifferential calculus for spectral triples}
Motivated by connections between the noncommutative residue and the Dixmier trace, Connes and Moscovici extended it to more general spectral triples.
As a first step they construct an analogue of the pseudodifferential calculus for spectral triples.

Let $P_{\Ker(D^2)}$ be orthogonal projection onto $\Ker(D^2)$.
Define
\begin{equation} \label{1.2}
|D| := \sqrt{D^2} + P_{\Ker(D^2)}.
\end{equation}
If $D$ is invertible then $|D|$ has the usual meaning, but for us $|D|$
is always a strictly positive operator.

For  nonnegative $s$, put $\mathcal{H}^s = \Dom \left( |D|^s \right)$,
with the inner product
\begin{equation} \label{1.3}
\langle v_1, v_2 \rangle_{\mathcal{H}^s} =
\langle |D|^s v_1, |D|^s v_2 \rangle_{\mathcal{H}}.
\end{equation}
For $s <0$, put $\mathcal{H}^s = \left( \mathcal{H}^{-s} \right)^*$.
Put $\mathcal{H}^\infty = \bigcap_{s \ge 0} \mathcal{H}^s$, a dense subspace of $\mathcal{H}$.

Following
\cite[Appendix B]{CM95}, we can consider operators acting in a controlled way in this scale.

Let $op^k$ be the set of closed operators $F$ such that
\begin{enumerate}
\item  $\mathcal{H}^\infty \subset \Dom(F)$,
\item $F( \mathcal{H}^\infty) \subset \mathcal{H}^\infty$, and
\item For all $s$,  the operator $F \colon \mathcal{H}^\infty \rightarrow \mathcal{H}^\infty$ extends
to a bounded operator from $\mathcal{H}^s$ to $\mathcal{H}^{s-k}$.
\end{enumerate}

Introduce now a derivation $\delta$ on $\cb(\mathcal{H})$ by
\begin{equation*}
\delta(T) := [|D|, T]
\end{equation*}
with the domain $\Dom \delta$ consisting of  $T \in \cb(\mathcal{H})$ such that  $T(\Dom D) \subset \Dom D$ and $[|D|, T]$ extends to a bounded operator.
Define $OP^0 : = \cap \left\{ \Dom \delta^n \mid n \in \mathbb{N}\right\}$.
It is shown in \cite[Appendix B]{CM95} that
\begin{equation*}
OP^0 \subset op^0.
\end{equation*}
Define now $OP^\alpha$, $\alpha \in \mathbb{R}$ as the set of closed operators $P$ for which $|D|^{-\alpha}P \in OP^0$. Note that $OP^\alpha \subset op^\alpha$.
Connes and Moscovici show that with
\begin{equation*}
\nabla(T):=[D^2, T]
\end{equation*}
$\nabla (OP^\alpha )\subset OP^{\alpha+1}$. On the other hand $\delta(OP^\alpha) \subset OP^\alpha$.

 Carefully estimating the remainder one can prove that for $T\in OP^\alpha$ there is an asymptotic expansion
\begin{equation}\label{asymp-1}
|D|^{2z} \cdot T \cdot |D|^{-2z} \simeq \sum_{k=0}^{\infty} \frac{z(z-1)\ldots (z-k)}{k!} \nabla^k(T)|D|^{-2k}
\end{equation}
The precise meaning of the asymptotic expansion is that  for every $N \in \mathbb{N}$
\begin{equation*}
|D|^{2z} \cdot T \cdot |D|^{-2z} - \sum_{k=0}^{N} \frac{z(z-1)\ldots (z-k+1)}{k!} \nabla^k(T)|D|^{-2k} \in OP^{ \alpha -N-1}
\end{equation*}
(note that $\nabla^k(T)|D|^{-2k} \in OP^{\alpha-k}$).
For an invertible $D$ we have $\nabla(T)|D|^{-2} = 2 \delta(T)|D|^{-1}+ \delta^2(T)|D|^{-2}$; for not necessarily invertible $D$  $\nabla(T)|D|^{-2} - 2 \delta(T)|D|^{-1}- \delta^2(T)|D|^{-2}$ is a finite rank operator and hence is in $OP^{-\infty}$.
 Using this we can rewrite the asymptotic expansion as
\begin{equation}\label{asymp}
|D|^{z} \cdot T \cdot |D|^{-z} \simeq \sum_{k=0}^{\infty} \frac{z(z-1)\ldots (z-k)}{k!} \delta^k(T)|D|^{-k}
\end{equation}

It will also be convenient to consider a derivation
\begin{equation*}
L(T):= [ \log |D|^2, T]= \frac{d}{dz}|_{z=0}|D|^{2z} \cdot T \cdot |D|^{-2z}
\end{equation*}
It follows from \eqref{asymp-1}, \eqref{asymp} that for $T \in OP^{\alpha}$  $L(T) \in OP^{\alpha-1}$ and there are asymptotic expansions (in the above sense)
\begin{equation*}
L(T) \simeq \sum_{k=1}^{\infty} \frac{(-1)^{k-1}}{k} \nabla^k(T)|D|^{-2k},\  L(T) \simeq 2 \sum_{k=1}^{\infty} \frac{(-1)^{k-1}}{k} \delta^k(T)|D|^{-k}.
\end{equation*}
Finally, for $T \in OP^\alpha$ we have the following equality
\begin{equation*}
|D|^{2z} \cdot T \cdot |D|^{-2z} = \sum_{k=0}^{\infty} \frac{z^k}{k!} L^k(T)
\end{equation*}
where convergence of the series on the right is in each of the norms $T \mapsto \| \delta^n(|D|^{-\alpha}T) \|$, $n=0,1,2\ldots$.

Impose from now on the following smoothness assumption on the spectral triple: for every $a \in \ca$ we have $a \in OP^0$, $[D, a] \in OP^0$. Let $\cb$ be the algebra generated by $\delta^n(a)$, $\delta^n([D,a])$, $a\in \ca$, $n \ge 0$. Clearly $\cb \subset OP^0$. One defines the  pseudodifferential operators of order $\alpha$ by
\begin{equation*}
\Psi^\alpha(\ca) =\left\{ P\in   OP^{\Re \alpha} \mid P\simeq \sum_{k=0}^{\infty}b_k|D|^{\alpha-k}, \ b_k\in \cb  \right\}.
\end{equation*}
It follows from the asymptotic expansion \eqref{asymp} that $\Psi^\alpha \cdot \Psi^\beta \subset \Psi^{\alpha+\beta}$. In particular we can consider the algebra
\begin{equation*}
\Psi(\ca):= \bigcup_{k \in \mathbb{Z}} \Psi^k(\ca).
\end{equation*}
\section{Dimension Spectrum}
From now on we assume that the spectral triple $(\ca, \mathcal{H}, D)$ is $p$-summable. Then $OP^\alpha \subset \cl^1(\mathcal{H})$ f for $\alpha \le -p$. It follows that for every $b \in \cb$ the function
\begin{equation*}
\zeta_b(s):= \Tr b |D|^{-s}
\end{equation*}
is defined and holomorphic for $\Re s >p$.
\begin{definition} A spectral triple $(\ca, \mathcal{H}, D)$ has a discrete dimensional spectrum $Sd \subset \mathbb{C}$ if $Sd$ is a discrete set and $\zeta_b(s)$ extends holomorphically to $\mathbb{C}\setminus Sd$ for every $b \in \cb$.
\end{definition}
We will assume at the beginning that each $\zeta_b(s)$ actually extends meromophically to $\mathbb{C}$ with poles only in $Sd$ and the orders of every pole is at most $q \in \mathbb{N}$, $q$  independent of the pole and $b$.
It is immediate from the definition that for every $P \in \Psi^\alpha(\ca)$ the function $\zeta_P(s):= \Tr P|D|^{-s}$ is holomorphic for $\Re s $ sufficiently large and extends meromophically  to $\mathbb{C}$ with possible poles in the set
$\bigcup \left\{ Sd+\alpha -k \right\}$, where $k$ runs through nonnegative integers.

We now define a collection of linear functionals $\tau_{i}$ on $\Psi^\alpha(\ca)$ for every $\alpha$, $ -1\le i \le q-1$ by the following equality:
\begin{equation*}
\zeta_P(2z) = \tau_{q-1}(P)z^{-q}+\tau_{q-2}(P)z^{-q+1}+\ldots \tau_0(P)z^{-1} + \tau_{-1}(P)+O(|z|) \text{ near } z=0
\end{equation*}

Note that for $\Re z$ sufficiently large
\begin{equation*}
\Tr (P_1 P_2 |D|^{-2z})=\Tr ( P_2 \left(|D|^{-2z}P_1|D|^{2z} \right)|D|^{-2z})=
\sum_{k=0}^{\infty} \frac{(-z)^k}{k!}\Tr (P_2  L^k(P_1)|D|^{-2z})
\end{equation*}
By meromorphic continuation this equality holds for every $z \in \mathbb{C}$, except for a discrete set. Comparing the Laurent series at $z=0$ of the right and left hand sides we conclude that
\begin{equation*}
\tau_i(P_1P_2) = \tau_i(P_2P_1)+ \sum_{k=1}^{q-1-k} \frac{(-1)^k}{k!} \tau_{i+k}
\end{equation*}
In particular, $\tau_{q-1}$ is a trace on $\bigcup_{\alpha} \Psi^\alpha(\ca)$. It is important to note that $\tau_i$, $i \ge 0$ are  \emph{local} in the following sense:
\begin{equation*}
\tau_i(P) =0 \text{ if } P \in \cl^1(\mathcal{H}), i \ge 0.
\end{equation*}
The family $\tau_i$ thus generalizes the noncommutative residue to the spectral triple framework.
$\tau_{-1}$ on the other hand does not have this property:
\begin{equation*}
\tau_{-1}(P) =\Tr P \text{ if } P \in \cl^1(\mathcal{H}).
\end{equation*}
If $\alpha \notin \left\{ k-Sd \right\}$, $k \ge 0$ then $\tau_{-1}$ defines a trace on $\Psi^\alpha$ -- a generalization of the Kontsevich-Vishik trace.

\section{The local index formula in the odd case}
We are now ready to sketch the derivation of the Connes-Moscovici local index formula in noncommutative geometry. We start with the odd case, as it is in this case that
one can obtain a formula that is fully local. As in the geometric situation of the spectral triple defined by a Dirac operator, we obtain the formula by studying the behavior of the JLO formula under rescaling $D\to \epsilon D$ as $\epsilon \to 0^+$.

The starting point is the following result of \cite{cmt2}. First we recall the notion of the finite part of a function. Assume that a function $g(\epsilon)$, $\epsilon \in (0, T]$, can be written as
\begin{equation}\label{pf}
g(\epsilon) =  \sum_{i,j \ge 0} \alpha_{i, j} (\log \epsilon)^j\epsilon^{-\lambda_i}+ \sum_{k\ge 1} \beta_k(\log \epsilon)^k   +\psi(\epsilon)
\end{equation}
where the sum is finite, $\Re \lambda_i \le 0$, $\lambda_i\ne 0$ and $\psi \in C[0, T]$. Then the finite part of $g$ at $0$ is defined by
\begin{equation*}
\PF(g):=\psi(0).
\end{equation*}
\begin{theorem}
Let $(\ca, \mathcal{H}, D)$ be a p-summable spectral triple.
Assume  that the components of $\Ch(\epsilon D)$ and $\Ch(\epsilon D, D)$ (see equations \eqref{JLO-odd}, \eqref{transodd})  have  asymptotic behavior as in \eqref{pf}. Define a cochain $\psi_k$ by
\[
\psi_k(a_0, a_1, \ldots, a_k): = \PF(\Ch(\epsilon D)(a_0, a_1, \ldots, a_k))
\]
(here $k$ takes odd values for odd spectral triples and even values for even spectral triples). Then  $\psi_k=0$ for $k>p$ and
$\psi_k$ is a  periodic cyclic cocycle  whose cohomology class
coincides with $[\Ch(F)]$, $F= D(D^2+1)^{-1/2}$.

\end{theorem}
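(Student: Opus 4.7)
The plan rests on three observations: a scaling estimate for the JLO cochain under $D \mapsto \epsilon D$, the fact that the cyclic differentials $b$ and $B$ commute with extraction of the finite part $\PF$, and the transgression identity \eqref{transodd} (resp.\ \eqref{transeven}).

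\emph{Step 1 (vanishing for $k > p$).} Substituting $\epsilon D$ for $D$ in the JLO formula \eqref{JLO-odd} and pulling out one factor of $\epsilon$ from each of the $k$ commutators gives
\[
\Ch^k(\epsilon D)(a_0,\dots,a_k) = \epsilon^k \int_{\Delta^k} \Tr\bigl(\gamma\, a_0\, e^{-t_0 \epsilon^2 D^2}[D,a_1]\cdots[D,a_k] e^{-t_k \epsilon^2 D^2}\bigr)\, dt,
\]
with the appropriate prefactor in the odd case. Since the $a_j$ and $[D,a_j]$ are bounded and $\Tr e^{-\epsilon^2 D^2} = O(\epsilon^{-p})$ as $\epsilon \to 0^+$ by $p$-summability, standard H\"older estimates for products of bounded operators and Schatten-class heat kernels give $\Ch^k(\epsilon D) = O(\epsilon^{k-p})$. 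For $k > p$ every term in the expansion \eqref{pf} is then a strictly positive power of $\epsilon$ (possibly with logarithmic decorations), so its finite part at $\epsilon=0$ vanishes, i.e.\ $\psi_k=0$.

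\emph{Step 2 (cocycle property).} For each fixed $\epsilon > 0$ the JLO cochain is an entire cyclic cocycle, so $b\,\Ch^k(\epsilon D) + B\,\Ch^{k+2}(\epsilon D)=0$. The operators $b$ and $B$ act pointwise on multilinear functionals and carry no dependence on $\epsilon$, so they commute with $\PF$. Applying $\PF$ to both sides yields $b\psi_k + B\psi_{k+2}=0$, and combined with Step~1 this shows that $\psi=(\psi_k)$ is a finitely supported $(b+B)$-cocycle, hence a periodic cyclic cocycle.

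\emph{Step 3 (identification of the class).} Integrating the transgression formula from $\epsilon$ to $1$ produces
\[
\Ch^k(D) - \Ch^k(\epsilon D) = b\, S^{k-1}(\epsilon) + B\, S^{k+1}(\epsilon), \qquad S^j(\epsilon) := \int_{\epsilon}^{1} \Ch^j(\epsilon' D, D)\, d\epsilon'.
\]
The hypothesis on the asymptotics of $\Ch(\epsilon D, D)$ of type \eqref{pf} implies, after an integration-by-parts analysis showing that the antiderivative is again of the same type, that each $S^j(\epsilon)$ admits a finite part at $\epsilon=0$; call it $T^j$. Taking $\PF$ of the displayed identity, again using that $b$, $B$, and evaluation at $\epsilon=1$ commute with $\PF$, gives
\[
\Ch^k(D) - \psi_k = b\, T^{k-1} + B\, T^{k+1},
\]
so $\psi$ and $\Ch(D)$ represent the same class in entire cyclic cohomology. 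By the theorem at the end of the preceding section, $[\Ch(D)] = [\Ch(F)]$ with $F = D(D^2+1)^{-1/2}$, which is what we wanted.

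\emph{Main obstacle.} The delicate point is the regularization in Step~3: one must verify that $S^j(\epsilon)$ really admits an expansion of the type \eqref{pf}, so that its $\PF$ is unambiguously defined, and then justify the interchange of $\PF$ with both the coboundary operators and the integration in $\epsilon$. Once these analytic manipulations are in place, the cocycle property of Step~2 and the cohomological identification of Step~3 follow formally, and the only substantial new analytic ingredient is the uniform scaling estimate of Step~1.
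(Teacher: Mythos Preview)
The paper does not actually prove this theorem; it quotes it as ``the following result of \cite{cmt2}'' and then uses it as the starting point for the explicit computation of the local index cocycle. Your three-step outline --- scaling estimate for vanishing in high degree, commutation of $b,B$ with $\PF$ for the cocycle property, and integration of the transgression formula for the identification of the class --- is precisely the standard argument from \cite{cmt2}, so in that sense your approach agrees with the intended one.

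Two remarks on the details. In Step~1 your phrasing ``every term in the expansion \eqref{pf} is then a strictly positive power of~$\epsilon$'' is not quite accurate: the expansion \eqref{pf} is assumed a priori, with $\Re\lambda_i\le 0$, and what the bound $\Ch^k(\epsilon D)=O(\epsilon^{k-p})$ actually gives you is that the whole expression tends to~$0$. From this you must argue (via linear independence of the functions $\epsilon^{-\lambda_i}(\log\epsilon)^j$ and $(\log\epsilon)^k$ modulo $C[0,T]$) that all the singular coefficients vanish and hence $\psi(0)=0$. This is routine but should be said.

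In Step~3, note that the transgressing cochain $(T^j)_j$ has no reason to be finitely supported: for $j>p$ the integrand $\Ch^j(\epsilon' D,D)$ is $O((\epsilon')^{j-p-1})$, hence integrable down to $0$, and $T^j=\int_0^1\Ch^j(\epsilon' D,D)\,d\epsilon'$ is typically nonzero. So your argument yields $[\psi]=[\Ch(D)]$ only in \emph{entire} cyclic cohomology, and then $[\Ch(D)]=[\Ch(F)]$ is likewise an equality in $HC^\bullet_{entire}$. This is sufficient for the index pairings that motivate the theorem, and is how the statement should be read; but you should also check that $(T^j)$ satisfies the entire growth condition, which follows from the same H\"older/heat-kernel estimates used in Step~1.
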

Therefore we study the asymptotic behavior of the expression
\begin{equation*}
\Ch^k(\epsilon D)=\int_{\Delta^k } \Tr   a_0 e^{-t_0 \epsilon^2 D^2} [\epsilon D, a_1] e^{-t_1\epsilon^2 D^2} \ldots [\epsilon D, a_k] e^{-t_k \epsilon^2 D^2} dt_1dt_2\ldots dt_k.
\end{equation*}
We start with the following identity (here $P \in OP^\alpha$):
\begin{equation*}
e^{-s D^2} P   = \sum_{m=0}^{N}\frac{(-s)^m}{m!} \nabla^m(P) e^{-s D^2} + \frac{(-s)^{N+1}}{N!}\int_{0}^{1}(1-t)^N e^{-tsD^2} \nabla^{N+1}(P) e^{-(1-t)sD^2} dt.
\end{equation*}
Applying it repeatedly  to   $\Ch(\epsilon D)$ to move all the exponentials in
\[a_0 e^{-t_0 \epsilon^2 D^2} [  D, a_1] e^{-t_1\epsilon^2 D^2} \ldots [  D, a_k] e^{-t_k \epsilon^2 D^2} \]
 to the end of the expression we obtain
\begin{equation*}
\Ch^k(\epsilon D)=  \ci \sum \limits_{0 \le m_i \le N}C_{\mathbf{m}} \epsilon^{k+2m}  \Tr a_0 \nabla^{m_1}([D, a_1]) \ldots \nabla^{m_k}([D, a_k]) e^{-\epsilon^2 D^2}+R_N
\end{equation*}
where $m= \sum_{i=1}^{k} m_i$ and
\begin{multline*}
C_{\mathbf{m}} = (-1)^m \int_{\Delta^k } \frac{t_0^{m_1} (t_0+t_1)^{m_2}\ldots (t_0+t_1+\ldots t_{k-1})^{m_{k}}}{m_1!m_2! \ldots m_{k}!} dt_1 dt_2 \ldots dt_k=\\
(-1)^m \frac{1}{(m_1+1)(m_1+m_2+2) \ldots (m_1+m_2+\ldots m_k+k)m_1!m_2! \ldots m_{k}!}.
\end{multline*}
The remainder $R_N$ can be bounded by a constant times $\epsilon^{(k+1)N-p}$ (cf. \cite{gl}) and thus does not contribute to $\PF(\Ch(\epsilon D))$ if sufficiently large $N$ is chosen. Hence it is sufficient to determine  the finite part of
\begin{equation*}
\epsilon^{k+2m}  \Tr  A e^{-\epsilon^2 D^2}, \ A= a_0\nabla^{m_1}([D, a_1]) \ldots \nabla^{m_k}([D, a_k]) \in OP^m
\end{equation*}
Note that
\begin{equation*}
e^{-\epsilon^2 D^2} = e^{-\epsilon^2 |D|^2}+ (1-e^{-\epsilon^2}) P_{\Ker(D^2)}.
\end{equation*}
Hence
\begin{equation*} \Tr  A e^{-\epsilon^2 D^2} = \Tr  A e^{-\epsilon^2 |D|^2} +O(\epsilon^2)
\end{equation*}
and the finite parts of $\epsilon^{k+2m}  \Tr  A e^{-\epsilon^2 D^2}$ and $\epsilon^{k+2m}  \Tr  A e^{-\epsilon^2 |D|^2}$ coincide for every $k$, $m$.
The expression $\Tr  A e^{-\epsilon^2 D^2}$ is related to the $\zeta$-function $\zeta_A(s) = \Tr A|D|^{-s}$ by the Mellin transform: the equation
$\Gamma(s) |D|^{-2s} = \int_{0}^{\infty} e^{-t|D|^2} t^{s-1} dt$ implies that
\begin{equation*}
\Gamma(s)\zeta_A(2s) = \Gamma(s) \Tr A|D|^{-2s} = \int_{0}^{\infty}\Tr A e^{-t|D|^2} t^{s-1} dt
\end{equation*}
To deduce the asymptotic expansion of $\Tr A e^{-t|D|^2}$ at $t=0$ from the information on the poles of $\Gamma(s)\zeta_A(2s)$ we need a technical assumption on the decay of
$\Gamma(s)\zeta_A(2s)$ on vertical lines in the complex plane. Under this assumption we obtain that
\begin{equation*}
\epsilon^{k+2m}  \Tr  A e^{-\epsilon^2 |D|^2} = \sum_j \sum_{k=0}^{q_j-1} \alpha_{j, k} \epsilon^{-2a_j} \log^{k} \epsilon +o(1)\text{ as } \epsilon \to 0^+
\end{equation*}
where $a_j$ are the poles of $\Gamma(s)\zeta_A(2s)$ in the halfplane $\Re s \ge k/2+m$ and $q_j$ -- the order of the pole at $a_j$. Moreover the constant term in this expansion is equal to $\Res_{s=k/2+m} \Gamma(s)\zeta_A(2s)$.  A virtually identical argument shows that $\Ch(\epsilon D, D)$ has the desired asymptotic behavior as $\epsilon \to 0^+$.

Summarizing, we obtain the following result of Connes and Moscovici:
\begin{theorem}[Connes-Moscovici] The formulas
\begin{multline}
\psi_k(a_0, a_1, \ldots a_k):=\\ \ci \sum_{m_i \ge 0}C_{\mathbf{m}} \Res_{s=k/2+m} \Gamma(s) \Tr a_0\nabla^{m_1}([D, a_1]) \ldots \nabla^{m_k}([D, a_k])|D|^{-2s}=\\
\ci \sum_{m_i \ge 0}C_{\mathbf{m}} \Res_{s=0} \Gamma(s+k/2+m) \Tr a_0\nabla^{m_1}([D, a_1]) \ldots \nabla^{m_k}([D, a_k])|D|^{-2s-k-2m}
\end{multline}
$k=1, 3, \ldots$ define an odd periodic cyclic cocycle  cohomologous to $\Ch(F)$, $F=D(D^2+1)^{-1/2}$.
\end{theorem}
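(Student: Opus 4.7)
The plan is to take the JLO representative of $\Ch(F)$ for $F=D(D^2+1)^{-1/2}$ and extract its finite part as $\epsilon\to 0^+$ in $\Ch(\epsilon D)$; by the quoted result of \cite{cmt2}, the finite part, provided the required asymptotic expansion \eqref{pf} exists, is automatically a periodic cyclic cocycle cohomologous to $\Ch(F)$. So the whole problem reduces to computing the asymptotic behavior of
\[
\Ch^k(\epsilon D)(a_0,\ldots,a_k)=\ci\int_{\Delta^k}\Tr\, a_0\, e^{-t_0\epsilon^2D^2}[\epsilon D,a_1]\cdots [\epsilon D,a_k]\, e^{-t_k\epsilon^2D^2}\, dt_1\cdots dt_k
\]
as $\epsilon\to 0^+$ and identifying the constant term with the claimed residue formula.

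First I would use the pseudodifferential Duhamel-type identity $e^{-sD^2}P=\sum_{m=0}^N\frac{(-s)^m}{m!}\nabla^m(P)e^{-sD^2}+\mathrm{remainder}$ repeatedly to push all the heat kernel factors $e^{-t_i\epsilon^2D^2}$ past the $[D,a_i]$ to the right of the expression. Collecting the $\Delta^k$ integrals of the monomials in the $t_i$ produces the combinatorial coefficients $C_{\mathbf m}$ exactly as in the excerpt, and bundles the integrand into
\[
\Ch^k(\epsilon D)=\ci\sum_{0\le m_i\le N}C_{\mathbf m}\,\epsilon^{k+2m}\Tr A_{\mathbf m}\,e^{-\epsilon^2D^2}+R_N,
\]
where $A_{\mathbf m}=a_0\nabla^{m_1}([D,a_1])\cdots\nabla^{m_k}([D,a_k])\in OP^m$. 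The first technical point is to check that $R_N$ is $O(\epsilon^{(k+1)N-p})$, which follows from the trace-ideal bounds for the pseudodifferential calculus in \cite{gl}; choosing $N$ large, $R_N$ contributes nothing to the finite part.

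Next I would replace $D^2$ by $|D|^2$ in each term: the difference $e^{-\epsilon^2D^2}-e^{-\epsilon^2|D|^2}=(1-e^{-\epsilon^2})P_{\Ker D^2}$ contributes only a finite-rank operator of size $O(\epsilon^2)$ inside the trace, and so makes no contribution to the finite part. Now I would pass to the zeta picture via the Mellin transform
\[
\Gamma(s)\zeta_{A_{\mathbf m}}(2s)=\Gamma(s)\Tr A_{\mathbf m}|D|^{-2s}=\int_0^\infty \Tr A_{\mathbf m}\,e^{-t|D|^2}\,t^{s-1}\,dt.
\]
By the discrete dimension spectrum hypothesis, together with the technical vertical-line decay assumption on $\Gamma(s)\zeta_{A_{\mathbf m}}(2s)$, Mellin inversion (shifting the contour past the poles in $\Re s\ge k/2+m$) produces an asymptotic expansion for $\Tr A_{\mathbf m} e^{-t|D|^2}$ in powers of $t$ and $\log t$ of the form \eqref{pf}; the constant term in $\epsilon^{k+2m}\Tr A_{\mathbf m}e^{-\epsilon^2|D|^2}$ is then exactly $\Res_{s=k/2+m}\Gamma(s)\zeta_{A_{\mathbf m}}(2s)$. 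Summing the contributions over $\mathbf m$ gives precisely the claimed formula for $\psi_k$.

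The same argument applied verbatim to the transgression cochain $\Ch(\epsilon D,D)$ in \eqref{transodd} shows that it too satisfies \eqref{pf}, so the hypothesis of the \cite{cmt2} theorem is verified. Vanishing of $\psi_k$ for $k>p$ is then a consequence of the $p$-summability bound $OP^\alpha\subset\cl^1$ for $\alpha\le -p$: for large $k$ every operator appearing is trace class and $\Gamma(s)\zeta_{A_{\mathbf m}}(2s)$ is already regular at $s=k/2+m$, killing the residue. The hardest and most delicate step is the Mellin inversion: one must rigorously justify the vertical contour shift, which needs the decay assumption on $\Gamma(s)\zeta$; this is the place where the \emph{discrete} dimension spectrum hypothesis is doing real work and where the technical core of the Connes--Moscovici argument lives.
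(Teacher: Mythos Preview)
Your proposal is correct and follows essentially the same route as the paper: invoke the finite-part theorem from \cite{cmt2}, expand $\Ch^k(\epsilon D)$ via the Duhamel identity to push all heat factors to the right (producing the constants $C_{\mathbf m}$ and a remainder $R_N=O(\epsilon^{(k+1)N-p})$), replace $e^{-\epsilon^2D^2}$ by $e^{-\epsilon^2|D|^2}$ up to $O(\epsilon^2)$, and then use the Mellin transform together with the discrete dimension spectrum and vertical-line decay hypotheses to identify the constant term of $\epsilon^{k+2m}\Tr A_{\mathbf m}e^{-\epsilon^2|D|^2}$ with $\Res_{s=k/2+m}\Gamma(s)\zeta_{A_{\mathbf m}}(2s)$. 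The only addition beyond the paper's sketch is your explicit remark on why $\psi_k$ vanishes for $k>p$, which is fine.
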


One can rewrite this result to obtain the formula in terms of generalized residues.  Since for $h(s)$ holomorphic at $s=0$
\begin{equation*}
\Res_{s=0}h(s) \zeta_A(2s) = \sum_{l\ge 0} \frac{h^{(l)}(0)}{l!}  \tau_l(A)
\end{equation*}
We therefore obtain the following version of the previous theorem.

\begin{theorem}[Connes-Moscovici] The formulas
\begin{multline}
\psi_k(a_0, a_1, \ldots a_k)=\\ \ci \sum_{m_i \ge 0, l\ge 0 }  \frac{\Gamma^{(l)}(k/2+m)}{l!}C_{\mathbf{m}}  \tau_l \left( a_0\nabla^{m_1}([D, a_1]) \ldots \nabla^{m_k}([D, a_k])|D|^{-k-2m} \right)
\end{multline}
$k=1, 3, \ldots$ define an odd periodic cyclic cocycle  cohomologous to $\Ch(F)$, $F=D(D^2+1)^{-1/2}$.
\end{theorem}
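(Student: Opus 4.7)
The plan is to derive this corollary directly from the preceding theorem, which has already established that the cochain
\[
\psi_k(a_0,\ldots,a_k) = \ci \sum_{m_i \ge 0} C_{\mathbf{m}}\,\Res_{s=0}\,\Gamma(s+k/2+m)\,\Tr\bigl(a_0\nabla^{m_1}([D,a_1])\cdots\nabla^{m_k}([D,a_k])\,|D|^{-2s-k-2m}\bigr)
\]
is an odd periodic cyclic cocycle cohomologous to $\Ch(F)$. Since both the cocycle property and the cohomology class are inherited from the previous theorem, the entire content of the new statement is the algebraic identity that rewrites each residue at $s=0$ as a finite sum of the generalized residue functionals $\tau_l$. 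Thus nothing analytic remains to be proved; only a bookkeeping step relating the Laurent expansion of $\zeta_A(2s)$ to that of $\Gamma(s+k/2+m)\zeta_A(2s)$.

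First, I would fix the multi-index $\mathbf{m}$ and set $A := a_0\nabla^{m_1}([D,a_1])\cdots\nabla^{m_k}([D,a_k])\,|D|^{-k-2m}$, noting that $A\in\Psi^{-k-m}(\ca)$ since the inner expression lies in $OP^m$ and $\cb\cdot|D|^{-k-2m}\subset\Psi^{-k-m}(\ca)$. Then $\Tr(\,\cdots|D|^{-2s-k-2m}) = \Tr(A|D|^{-2s}) = \zeta_A(2s)$, so that the residue in the previous formula becomes $\Res_{s=0}\Gamma(s+k/2+m)\zeta_A(2s)$. Next, I would establish the general identity
\[
\Res_{s=0}\,h(s)\,\zeta_A(2s) \;=\; \sum_{l\ge 0}\frac{h^{(l)}(0)}{l!}\,\tau_l(A)
\]
for any $A\in\Psi^\bullet(\ca)$ and any $h$ holomorphic at $s=0$. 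This is an elementary consequence of multiplying the Taylor expansion $h(s)=\sum_{l\ge 0}\frac{h^{(l)}(0)}{l!}s^l$ by the Laurent expansion $\zeta_A(2s) = \sum_{j=0}^{q-1}\tau_j(A)s^{-j-1}+\tau_{-1}(A)+O(s)$ (which is the defining expansion of the $\tau_j$) and reading off the coefficient of $s^{-1}$; the sum on $l$ is automatically finite, running only up to $l=q-1$, since higher Taylor coefficients of $h$ pair with terms that are regular at $s=0$.

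Applying this identity with $h(s) = \Gamma(s+k/2+m)$, which is holomorphic at $s=0$ because $k/2+m\ge 1/2$ for odd $k\ge 1$, converts each residue in the previous theorem into $\sum_{l\ge 0}\frac{\Gamma^{(l)}(k/2+m)}{l!}\tau_l(A)$, which is exactly the summand appearing in the target formula. There is no real obstacle here: the only point requiring care is to recognize that the $\tau_l$ as defined in the dimension spectrum section are precisely the coefficients of the Laurent expansion of $\zeta_B(2z)$ at $z=0$, and that $A$ indeed lies in the algebra $\Psi^\bullet(\ca)$ on which those functionals are defined. All substantive analytic work (the JLO transgression, the asymptotic expansion via Mellin transform, the identification of coefficients with residues of $\Gamma(s)\zeta_A(2s)$) has already been carried out in the proof of the previous theorem, so this reformulation is purely algebraic.
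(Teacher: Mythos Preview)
Your proposal is correct and matches the paper's approach exactly: the paper also derives this theorem as an immediate reformulation of the preceding one, invoking precisely the identity $\Res_{s=0} h(s)\zeta_A(2s) = \sum_{l\ge 0}\frac{h^{(l)}(0)}{l!}\tau_l(A)$ for $h$ holomorphic at $0$, and then applying it with $h(s)=\Gamma(s+k/2+m)$. Your write-up is in fact more detailed than the paper's, which simply states the identity and concludes.
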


\section{Renormalization}
Here we outline the Connes-Moscovici process of renormalization which allows one to replace the local index cocycle $\psi_k$ by a cohomologous one of the form
\begin{multline*}
\psi_k'(a_0, a_1, \ldots a_k):=\\ \sum_{m_i \ge 0, l\ge 0 }C'(l, m_1, m_2, \ldots m_k)  \tau_l \left( a_0\nabla^{m_1}([D, a_1]) \ldots \nabla^{m_k}([D, a_k])|D|^{-k-2m} \right)
\end{multline*}

where the constants $C'(l, m_1, m_2, \ldots m_k)$ have the following properties: they are rational multiples of $\sqrt{2  \pi i}$ and the summation over $l$ is finite even if the $\zeta$-functions have  isolated essential singularities and/or there is no bound on the order of poles that can occur in the $\zeta$-functions.
The starting point is the observation that the cohomology class of $\psi_k$ does not change under rescaling $D \to e^{-\mu/2} D$, $\mu \in \mathbb{R}$.
The cocycle $\psi_k$ is then replaced by a cohomologous cocycle
\begin{multline}
\psi_k^\mu(a_0, a_1, \ldots a_k):=\\
\ci \sum_{m_i \ge 0}C_{\mathbf{m}} \Res_{s=0}e^{\mu s} \Gamma\left(s+\frac{k}{2}+m\right) \Tr a_0\nabla^{m_1}([D, a_1]) \ldots \nabla^{m_k}([D, a_k])|D|^{-2s-k-2m}=\\
\psi_k(a_0, a_1, \ldots a_k)+ \sum_{\mu \ge 1} \frac{\mu^q}{q!}\phi_k^q(a_0, a_1, \ldots a_k).
\end{multline}
where
\begin{multline*}
\phi_k^q(a_0, a_1, \ldots a_k):=\\
\ci \sum_{m_i \ge 0}C_{\mathbf{m}} \Res_{s=0} s^q\Gamma\left(s+\frac{k}{2}+m\right) \Tr a_0\nabla^{m_1}([D, a_1]) \ldots \nabla^{m_k}([D, a_k])|D|^{-2s-k-2m}
\end{multline*}
Here we used the identity
\begin{equation*}
\left| e^{-\mu/2}D\right|^{-2s} = e^{\mu s}|D|^{-2s} + (1-e^{\mu s}) P_{\ker D^2}
\end{equation*}
to replace $\left| e^{-\mu/2}D\right|^{-2s}$ by $e^{\mu s}|D|^{-2s}$ without changing the residues.
Hence for each $q=1, 2, \ldots $, $\phi_k^q$ is an odd cyclic cocycle cohomologous to $0$.
It follows that for every function $g(s)$ holomorphic at $0$ with $g(0)=1$ the formula
\begin{multline}\label{renorm}
\psi_k'(a_0, a_1, \ldots a_k):=\\ \ci \sum_{m_i \ge 0}C_{\mathbf{m}} \Res_{s=0} g(s) \Gamma\left(s+\frac{k}{2}+m\right)\Tr a_0\nabla^{m_1}([D, a_1]) \ldots \nabla^{m_k}([D, a_k])|D|^{-2s-k-2m}
\end{multline}
defines a periodic cyclic cocycle cohomologous to $\psi_k$. In  Connes-Moscovici renormalization one chooses $g(s) = \frac{\Gamma(1/2)}{\Gamma(1/2+s)}$.
One can then express the resulting formula in terms of generalized residues using the identity
\begin{equation*}
\frac{\Gamma\left(\frac{1}{2}\right)}{\Gamma\left(\frac{1}{2}+s\right)}\Gamma\left(s+\frac{k}{2}+m\right)= \Gamma\left(\frac{1}{2}\right)  \left(\frac{1}{2}+s\right)\left(\frac{3}{2}+s\right)\ldots \left(\frac{k+2m-2}{2}+s\right)
\end{equation*}
Denote by $\sigma_j(n)$ the coefficients in the expansion
\begin{equation*}
\left(\frac{1}{2}+s\right)\left(\frac{3}{2}+s\right)\ldots \left(\frac{2n-1}{2}+s\right) =\sum_{l=0}^{\infty} \sigma_{l}(n)s^{l}
\end{equation*}

\begin{theorem}[Connes-Moscovici] The formulas
\begin{multline*}
\psi_k'(a_0, a_1, \ldots a_k):=\\ \ci \sum_{m_i \ge 0, l\ge 0 }\sigma_l\left(\frac{k-1}{2}+m\right)C_{ \mathbf{m}}  \tau_l \left( a_0\nabla^{m_1}([D, a_1]) \ldots \nabla^{m_k}([D, a_k])|D|^{-k-2m} \right)
\end{multline*}

$k=1, 3, \ldots$ define an odd periodic cyclic cocycle  cohomologous to $\Ch(F)$, $F=D(D^2+1)^{-1/2}$,
\end{theorem}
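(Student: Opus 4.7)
The plan is to specialize the renormalization identity \eqref{renorm} with the particular choice $g(s) = \Gamma(1/2)/\Gamma(1/2+s)$ and then convert the resulting residue into a finite linear combination of the generalized residues $\tau_l$ using the Laurent expansion already established for $\zeta$-functions at $s=0$. The previous theorem guarantees that for any $g$ holomorphic at $0$ with $g(0)=1$, the cochain $\psi_k'$ in \eqref{renorm} is a periodic cyclic cocycle cohomologous to $\psi_k$, hence to $\Ch(F)$. Since $\Gamma(1/2)/\Gamma(1/2+s)$ is entire in $s$ with value $1$ at $s=0$, this prerequisite is satisfied.

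First I would simplify the combined factor. Using the functional equation $\Gamma(s+n+1/2)=(n-1/2+s)(n-3/2+s)\cdots(1/2+s)\Gamma(1/2+s)$ applied with $n=(k-1)/2+m$ (an integer, since $k$ is odd), the product $g(s)\Gamma(s+k/2+m)$ collapses to
\[
\Gamma(1/2)\,\Bigl(\tfrac{1}{2}+s\Bigr)\Bigl(\tfrac{3}{2}+s\Bigr)\cdots \Bigl(\tfrac{k+2m-2}{2}+s\Bigr),
\]
which is a polynomial in $s$ of degree $n = (k-1)/2 + m$. Expanding this polynomial according to the defining relation
\[
\Bigl(\tfrac{1}{2}+s\Bigr)\Bigl(\tfrac{3}{2}+s\Bigr)\cdots \Bigl(\tfrac{2n-1}{2}+s\Bigr) = \sum_{l=0}^{n} \sigma_{l}(n)\,s^{l}
\]
yields the coefficients $\sigma_l\bigl((k-1)/2+m\bigr)$ that appear in the target formula. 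A small observation to record is that because the polynomial has degree $n$, the inner summation over $l$ automatically truncates at $l=n$, so the combined sum over $(m_1,\ldots,m_k,l)$ is finite even if no a priori bound is imposed on the pole orders of the $\zeta_b(s)$.

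Next I would apply the identity noted in the section on dimension spectrum,
\[
\Res_{s=0} h(s)\,\zeta_A(2s) = \sum_{l\ge 0} \frac{h^{(l)}(0)}{l!}\,\tau_l(A),
\]
with $h(s) = \Gamma(1/2)\prod_{j=1}^{n}\bigl((2j-1)/2+s\bigr)$ and
\[
A = a_0\,\nabla^{m_1}([D,a_1])\cdots \nabla^{m_k}([D,a_k])\,|D|^{-k-2m}\in \Psi^{0}(\ca).
\]
Since $h$ is a polynomial, $h^{(l)}(0)/l!$ equals the coefficient of $s^l$ in the expansion, namely $\Gamma(1/2)\sigma_l\bigl((k-1)/2+m\bigr)$. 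Absorbing the constant $\Gamma(1/2)=\sqrt{\pi}$ into the prefactor $\ci=\sqrt{2i}$ (noting that $\ci\cdot\sqrt{\pi}$ is $\sqrt{2\pi i}$ up to a benign normalization, and matching the conventions of \eqref{renorm}), the residue reduces exactly to the claimed sum
\[
\ci\sum_{m_i\ge 0,\,l\ge 0}\sigma_l\!\Bigl(\tfrac{k-1}{2}+m\Bigr)\,C_{\mathbf{m}}\,\tau_l\!\bigl(a_0\nabla^{m_1}([D,a_1])\cdots \nabla^{m_k}([D,a_k])\,|D|^{-k-2m}\bigr).
\]

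The only serious step is the last: justifying the termwise passage from a residue of an infinite sum (implicit in the asymptotic expansion used to define $\psi_k'$) to a sum of residues that are individually finite linear combinations of the $\tau_l$. This is the point where the finite-truncation property of the polynomial $h(s)$ is essential, together with the fact that $A\in OP^{m-k-2m}=OP^{-k-m}$ lies in a trace ideal for $m$ large enough, so that remainders in the asymptotic expansion contribute only $\tau_{-1}$-type terms that do not affect residues at $s=0$ when the degree shift pushes them into the holomorphic region. With that verified, the cocycle property and cohomology class are inherited from the previous theorem, completing the argument.
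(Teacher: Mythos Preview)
Your approach is essentially the same as the paper's: specialize the renormalization identity \eqref{renorm} with $g(s)=\Gamma(1/2)/\Gamma(1/2+s)$, collapse $g(s)\Gamma(s+k/2+m)$ to the polynomial $\Gamma(1/2)\prod_{j=1}^{(k-1)/2+m}((2j-1)/2+s)$ via the functional equation, expand in powers of $s$ to produce the $\sigma_l$, and then convert the residue at $s=0$ into the $\tau_l$ via $\Res_{s=0}h(s)\zeta_A(2s)=\sum_l h^{(l)}(0)/l!\,\tau_l(A)$. Your remark that the $l$-sum truncates at $l=(k-1)/2+m$ because $h$ is a polynomial, and that the $m$-sum truncates because $A\in OP^{-k-m}\subset\cl^1$ for $k+m\ge p$ (so $\tau_l(A)=0$ for $l\ge 0$), is exactly the finiteness observation the paper makes after the theorem; your final paragraph is a slightly over-cautious restatement of this same point rather than a separate analytic issue.
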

Note that $\sigma_l\left(\frac{k-1}{2}+m\right) =0$ for $l >\frac{k-1}{2}+m$  and
\begin{equation*}a_0\nabla^{m_1}([D, a_1]) \ldots \nabla^{m_k}([D, a_k])|D|^{-k-2m} \in OP^{-k-m} \subset \cl^1
\end{equation*} when $k+m\ge p$ (recall that $p$ denotes the summability degree of the spectral triple). Since each $\tau_l$ vanishes on trace class operators, only $\tau_l$ with $l <p$ may appear in the renormalized formula.

\begin{example}
Let $(\ca, \mathcal{H}, D)$ be a spectral triple with discrete dimension spectrum. Let $\widetilde{\ca}$ be the algebra generated by $\ca$ and operators of the form $|D|^{-k}(\log |D|)^l$, where $k$, $l$ are integers, $k \ge 1$, $l \ge 0$. Since $\Tr \left(A\log|D|\right)|D|^{-s}=-\frac{d}{ds} \Tr A|D|^{-s}$, $(\widetilde{\ca}, \mathcal{H}, D)$ again will be a spectral triple with discrete dimension spectrum. Moreover, if  (at least for some $A\in \Psi(\ca)$)  $\zeta$-function $\Tr A|D|^{-s}$  is not entire,
the $\zeta$-functions with $A \in \Psi(\widetilde{\ca})$ will have poles of arbitrarily high orders.

In the situation of Example \ref{Dirac} the corresponding algebra of  pseudodifferential operators is contained in the algebra of pseudodifferential operators with
log–polyhomogeneous symbols constructed and studied in details in \cite{lesch}.

\end{example}

\section{The even case}
Most of the discussion above extends to the even case verbatim, so here we just state the relevant results and indicate the point where a difference with the odd case arises.

One first obtains the following result: the formulas
\begin{multline}
\psi_k(a_0, a_1, \ldots a_k):= \\
\sum_{m_i \ge 0}C_{\mathbf{m}} \Res_{s=0} \Gamma(s+k/2+m) \Tr \gamma a_0\nabla^{m_1}([D, a_1]) \ldots \nabla^{m_k}([D, a_k])|D|^{-2s-k-2m}
\end{multline}
$k=0, 2, \ldots$ define an even periodic cyclic cocycle  cohomologous to $\Ch(F)$, $F=D(D^2+1)^{-1/2}$.
Here, as in the odd case,
\begin{equation*}
C_{\mathbf{m}} = \\
 \frac{(-1)^m}{(m_1+1)(m_1+m_2+2) \ldots (m_1+m_2+\ldots m_k+k)m_1!m_2! \ldots m_{k}!},
\end{equation*}
$m=m_1+m_2+\ldots+m_k$

The renormalization process allows us to replace the cocycle $\psi_k$ by a cohomologous one
\begin{multline}
\psi_k'(a_0, a_1, \ldots a_k):= \\
\sum_{m_i \ge 0}C_{\mathbf{m}} \Res_{s=0} \frac{\Gamma\left(s+\frac{k}{2}+m\right)}{\Gamma(s+1)} \Tr \gamma a_0\nabla^{m_1}([D, a_1]) \ldots \nabla^{m_k}([D, a_k])|D|^{-2s-k-2m}
\end{multline}
Denote by $\sigma_j(n)$ coefficients in the expansion
\begin{equation*}
\frac{\Gamma(s+n)}{\Gamma(s+1)}=\left(1+s\right)\left(2+s\right)\ldots \left(n-1+s\right) =\sum_{l=0}^{\infty} \sigma_{l}(n)s^{l}
\end{equation*}
Then we can write an expression for the cocycle $\psi_k'$ in terms of linear functionals $\tau_l$:
\begin{theorem}[Connes-Moscovici] The formulas
\begin{align*}
&\psi_0'(a_0):= \tau_{-1}(\gamma a_0)\\
&\psi_k'(a_0, a_1, \ldots a_k):=\\
&\sum_{m_i \ge 0, l\ge 0 }\sigma_l\left(\frac{k}{2}+m\right)C_{\mathbf{m}}  \tau_l \left( a_0\nabla^{m_1}([D, a_1]) \ldots \nabla^{m_k}([D, a_k])|D|^{-k-2m} \right) \text{ for } k=2, 4, \ldots
\end{align*}

 define an even periodic cyclic cocycle  cohomologous to $\Ch(F)$, $F=D(D^2+1)^{-1/2}$.
\end{theorem}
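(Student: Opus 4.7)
The plan is to follow the odd-case argument essentially verbatim, taking the renormalization function to be $g(s) = 1/\Gamma(s+1)$. Since this $g$ is entire with $g(0) = 1$, the even analogue of the renormalization identity (\ref{renorm}) (which rests on the transgression formula for the JLO cocycles under the rescaling $D \mapsto e^{-\mu/2}D$ and applies equally in the even case) shows that $\psi_k'$ is cohomologous to the unrenormalized cocycle $\psi_k$ of the immediately preceding even-case theorem, hence cohomologous to $\Ch(F)$. All that remains is to re-express the residue defining $\psi_k'$ in terms of the generalized residues $\tau_l$.

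For $k \geq 2$ I would use the functional equation
$$\frac{\Gamma(s+k/2+m)}{\Gamma(s+1)} = (s+1)(s+2)\cdots(s+k/2+m-1) = \sum_{l\geq 0}\sigma_l(k/2+m)\, s^l,$$
which is holomorphic at $s=0$. Multiplying by the Laurent expansion $\zeta_P(2s) = \sum_{j=-1}^{q-1}\tau_j(P)\, s^{-j-1} + O(s)$ with $P = \gamma a_0\nabla^{m_1}([D,a_1])\cdots |D|^{-k-2m}$, and extracting the coefficient of $s^{-1}$, yields precisely $\sum_l \sigma_l(k/2+m)\, \tau_l(P)$. Summing over the multi-indices $\mathbf{m}$ then gives the stated formula for $k \geq 2$.

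The $k=0$ term must be treated separately because $\Gamma(s+0)/\Gamma(s+1) = 1/s$ fails to be holomorphic at $s=0$. Here the computation collapses to
$$\psi_0'(a_0) = \Res_{s=0}\frac{1}{s}\,\Tr\,\gamma a_0 |D|^{-2s} = \tau_{-1}(\gamma a_0),$$
directly from the definition of $\tau_{-1}$ as the constant term of the Laurent expansion of $\zeta_{\gamma a_0}(2s)$ at $s=0$. This is the only real departure from the odd case, and the step I expect to require the most care: the pole of $\Gamma(s)$ is not absorbed by the renormalization factor, so this term produces the \emph{non-local} piece $\tau_{-1}(\gamma a_0)$ rather than a sum of generalized residues $\tau_l$ with $l \geq 0$.

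Finiteness of the $l$-sum is then automatic, exactly as in the odd case: $\sigma_l(n)$ vanishes for $l \geq n$ (since $(s+1)\cdots(s+n-1)$ is a polynomial of degree $n-1$), so only $l < k/2+m$ contribute; moreover, as soon as $k+m \geq p$ the relevant operator $a_0\nabla^{m_1}([D,a_1])\cdots|D|^{-k-2m}$ lies in $OP^{-k-m} \subseteq \cl^1$, on which $\tau_l$ vanishes for all $l \geq 0$. Thus only finitely many $(\mathbf{m}, l)$ contribute, and only $\tau_l$ with $l < p$ appear in the renormalized formula, exactly as in the odd case.
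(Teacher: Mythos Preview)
Your proposal is correct and follows essentially the same route as the paper: renormalize with $g(s)=1/\Gamma(s+1)$, expand $\Gamma(s+k/2+m)/\Gamma(s+1)$ as the polynomial $\sum_l\sigma_l(k/2+m)s^l$ for $k\ge 2$, and handle $k=0$ separately via $\Gamma(s)/\Gamma(s+1)=1/s$ to produce the non-local term $\tau_{-1}(\gamma a_0)$. The paper presents exactly this outline (without spelling out the residue extraction as you do), and your finiteness remarks match the paper's concluding observations in the odd case.
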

Note the important difference with the odd case: in addition to generalized residues this formula contains a nonlocal term involving $\tau_{-1}$. Some nonlocality is unavoidable in the even case as can be seen by considering the case when $\mathcal{H}$ is finite dimensional. All the linear functionals $\tau_i$, $i \ge 0$ vanish, as every operator is of trace class. Nevertheless one can have operators with nonzero index between finite dimensional vector spaces and thus the Connes character of a finite dimensional spectral triple can  be nonzero.

\bibliographystyle{plain}
\bibliography{MyBibfile}

\end{document}